\newtheorem{theorem}{Theorem}
\newtheorem{proposition}[theorem]{Proposition}
\newtheorem{lemma}[theorem]{Lemma}
\newtheorem{definition}[theorem]{Definition}
\newtheorem{assumption}[theorem]{Assumption}
\newcommand {\p}{\partial}
\numberwithin{equation}{section}
\numberwithin{theorem}{section}
\begin{document}
\title[
The homogeneous complex k-Hessian equation]{
The Dirichlet Problem of Homogeneous Complex $k$-Hessian Equation in a $(k-1)$-pseudoconvex domain with isolated singularity}
\author{Zhenghuan Gao}
\address{Department of Mathematics, Shanghai University, Shanghai, 200444, China}
\email{gzh@shu.edu.cn}
\author{Xinan Ma} 
\address{School of Mathematical Sciences, University of Science and Technology of China, Hefei 230026,
	Anhui Province, China}
\email{xinan@ustc.edu.cn}

\author{Dekai Zhang}  
\address{Department of Mathematics, Shanghai University, Shanghai, 200444, China}
\email{dkzhang@shu.edu.cn}

\begin{abstract}
In this paper, we consider the homogeneous  complex $k$-Hessian equation in $\Omega\setminus\{0\}$. 
 We prove the existence and uniqueness  of the  $C^{1,\alpha}$ solution by constructing approximating solutions.
 The key point for us is to construct the subsolution for approximating problem and establish uniform gradient estimates and complex Hessian estimates  which is independent of the approximation. 

\end{abstract}
\maketitle

\section{Introduction}
Let $\Omega$ be a smooth  bounded domain of $\mathbb C^n$ 
 and $k$ be an integer such that $1\leq k\leq n$. We consider the homogeneous complex $k$-Hessian equations
\begin{align*}
(dd^cu)^k\wedge \omega^{n-k}=0\quad\text{in }\Omega\setminus\{0\}.
\end{align*}
Let $u$ be a real $C^2$ function in $\mathbb C^n$ and $\lambda=(\lambda_1,\cdots,\lambda_n)$ be the eigenvalues of the complex Hessian $(\frac{\partial^2u}{\partial z_i\partial \bar z_j})$, the complex $k$-Hessian operator is defined by

\begin{align*}
	H_{k}[u]:=\sum\limits_{1\le i_1<\cdots i_k\le n}\lambda_{i_1}\cdots \lambda_{i_k},
\end{align*}
where $1\le k\le n$.
Using the operators $d=\partial+\overline\partial$ and $d^c=\sqrt{-1}(\overline \partial-\partial )$, such that $dd^c=2\sqrt{-1}\partial\overline \partial$, one gets
$$(dd^cu)^k\wedge\omega^{n-k}=4^nk!(n-k)!H_k[u]d\lambda,$$
where $\omega=dd^c|z|^2$ is the fundamental K\"{a}hler form and $d\lambda$ is the volume form. 
When $k=1$, $H_1[u]=\frac14\Delta u$. When $k=n$, $H_n[u]=\det u_{i\bar j}$ is the complex Monge-Amp\`ere operator.

\subsection{Some known results and motivations}
Let $S_k(D^2u)$ be the $k$-Hessian of a real $C^2$ function $u$ in $\mathbb R^n$. When $k>1$, the Hessian equations $S_k(D^2u)=f$ and $H_k[u]=f$ are both nonlinear. When $f>0$, the Hessian equation is nondegenerate. When $f$ vanishes somewhere, the Hessian equation is degenerate.

\subsubsection{Results on bounded domain}
For the Hessian equation on $\mathbb R^n$, its Dirichlet problem with positive $f$
\begin{align*}
\begin{cases}
S_k(D^2u)=f&\quad\text{in }\Omega,\\
u=\varphi&\quad\text{on }\partial\Omega,
\end{cases}
\end{align*}
was studied by Ivochkina \cite{Ivochkina1985} for $k=1,2,3,n$ on convex domain with further assumptions on $f$ and by Caffarelli-Nirenberg-Spruck \cite{CNS3} for general $f>0$ and $k=1,2,\cdots,n$ by assuming $\Omega$ is $(k-1)$-convex. B. Guan \cite{GuanBo1994}  showed the geometric condition on $\Omega$ could be removed by assumption of existence of a strict subsolution. 
In \cite{TrudingerWang1997}, Trudinger-Wang developed a Hessian measure theory for Hessian operator. One can see a survey in Wang \cite{WangXujia2009} for more related topics. For the complex $k$-Hessian equation in $\mathbb C^n$, Li \cite{LiSongying2004} solved its Dirichlet problem via the subsolution approach.

For Monge-Amp\`ere equation in a bounded domain of $\mathbb R^n$, when $f=0$, 
Caffarelli-Nirenberg-Spruck \cite{CNS1986degenerate} proved the $C^{1,1}$ regularity in a bounded convex domain.
For general $f\geq 0$, Guan-Trudinger-Wang \cite{GTW1999} proved the $C^{1,1}$ regularity result when  $f^{\frac{1}{n-1}}\in C^{1,1}$.  Due to the counterexample by Wang \cite{WangXujia1995}, $C^{1,1}$ regularity is optimal. For $k$-Hessian equation in $\mathbb R^n$, the $C^{1,1}$ regularity is obtained by Krylov \cite{Krylov1989,Krylov1994} and Ivochkina-Trudinger-Wang \cite{ITW2004}.

For complex Monge-Amp\`ere equation, Lempert \cite{Lempert1981,Lempert1983} proved the Dirichlet problem admits a smooth solution with a logarithm pole at the origin on a strictly convex punctured domain $\Omega\backslash\{0\}$ when $f=0$. As for strongly pseudoconvex domain, Guan \cite{GuanBo1998} and B\l ocki \cite{Blocki2000} proved the solution is  $C^{1,1}$. In \cite{GuanPengfei2002}, Guan obtained the $C^{1,1}$ regularity for the solution on a ring domain. For general $f\geq 0$, the optimal $C^{1,1}$ regularity was in Caffarelli-Kohn-Nirenberg-Spruck \cite{CKNS2},Krylov \cite{Krylov1989, Krylov1994} for strongly pseudoconvex domain.  

\subsubsection{Results on unbounded domain}
The viscosity solution to nondegenerate $k$-Hessian equation on unbounded domain has been researched extensively.
Caffarelli-Li \cite{CaffarelliLi2003} solved the viscosity solution to  the Monge-Amp\`ere equation $\mathrm{det }\, D^2u=1$ with prescribed asymptotic behavior at infinity. Bao-Li-Li \cite{BaoLiLi2014} studied  the $k$-Hessian equation case. For the related results on other type nondegenerate fully nonlinear equations, one can see \cite{BaoLi2013,LiLi2018,Li2019}.

In \cite{LiWang2015}, Li-Wang consider the $\mathrm{det}\, D^2u=0$ on a strip region $\Omega:=\mathrm R^n\times[0,1]$. By assuming two boundary functions are both strictly convex $C^{1,1}(\mathbb R^{n-1})$ functions, they obtained the solutions is $C^{1,1}(\overline\Omega)$. If the boundary functions are locally uniformly convex $C^{k+2,\alpha}(\mathbb R^{n-1})$ function, then $u$ is the unique $C^{k+2,\alpha}(\overline \Omega)$ function. 

Recently, Xiao \cite{Xiaoling2022} and Ma-Zhang \cite{MaZhang2022} proved the $C^{1,1}$ regularity of Dirichlet  fot the homogeneous $k$-Hessian equation out of  $\Omega\subset\mathbb R^n$, by assuming $\Omega$ is starshaped, $(k-1)$-convex and and $1\leq k<\frac n2$ or $\Omega $ is $(k-1)$-convex and $1\leq k\leq n$ respectively.  For homogeneous complex $k$-Hessian equation, Gao-Ma-Zhang \cite{GMZ2022} obtained the $C^{1,1}$ regularity.

\subsubsection{Motivations}

Our paper is motivated by the research on the regularity of extremal function or Green function. In \cite{Klimek1985}, Klimek introduced the following extremal fucntion
$$g_\Omega(z,\xi)=\sup\{v\in \mathcal{PSH}(\Omega):v<0,\ v(z)\leq \log|z-\xi|+O(1)\}.$$
$g_\Omega(z,\xi)$ is also call the pluricomplex Green function on $\Omega$ with a logarithminc pole at $\xi$. If $\Omega$ is hyperconvex, Demailly \cite{Demailly1987} showed that $u(z)=g_\Omega(z,\xi)$ is continuous and is a unique solution to the homogeneous complex Monge-Amp\'ere equation,
\begin{align}\label{eqMA}
\begin{cases}
(dd^cu)^n=0&\quad\text{in }\Omega\setminus\{\xi\},\\
u=0&\quad\text{on }\partial\Omega,\\
u(z)=\log|z-\xi|+O(1)&\quad\text{as }z\rightarrow\xi.
\end{cases}
\end{align}
If $\Omega$ is strictly convex domain in $\mathbb C^n$ with smooth boundary, Lempert \cite{Lempert1981} proved \eqref{eqMA} admits a unique plurisubharmonic solution which is smooth. In the strongly pseudonconvex case, B. Guan \cite{GuanBo1998} proved $g_\Omega(z,\xi)\in C^{1,\alpha}(\overline\Omega\setminus\{\xi\})$ and later, B\l ocki  improved it to $C^{1,1}(\overline\Omega\setminus\{\xi\})$ in \cite{Blocki2000} and generalized it to several poles in \cite{Blocki2001}. Due to the counterexamples found by Bedford-Demailly \cite{BedfordDemailly1988}, $C^{1,1}$ regularity is optimal. 

P. Guan \cite{GuanPengfei2002} established  $C^{1,1}$ regularity of extremal function associated to intrinsic norms of Chen-Levine-Nirenberg \cite{CLN1969} and Beford-Taylor \cite{BedfordTaylor1979} by considering 
\begin{align*}
\begin{cases}
(dd^cu)^n=0&\quad\text{in }\Omega_0\setminus(\cup_{i=1}^m \Omega_i),\\
u=0&\quad\text{on }\partial\Omega_i, \ i=1,\cdots,n\\
u=1&\quad\text{on }\partial\Omega_0.
\end{cases}
\end{align*}
Applying the techniques from \cite{GuanPengfei2002}, B. Guan proved the $C^{1,1}$ regularity of pluricomplex Green function for the union of a finite collection of strongly pseudonconvex domains in $\mathbb C^n$.

In \cite{GMZRealinterior}, we considered the following homogeneous (real) $k$-Hessian equation in a punctured domain
\begin{equation}\label{GMZRealinterior}
\begin{cases}
S_k(D^2u)=0&\quad\text{in }\Omega\backslash\{0\},\\
u=c_k&\quad\text{on }\partial\Omega,\\
u(x)=h_k(x)&\quad\text{as }x\rightarrow 0
\end{cases}•
\end{equation}
where $c_k= 1$ and $h_k(x)=0$  if $k>\frac n2$, $c_k=-1$ and $h_k(x)=-|x|^{2-\frac nk}+O(1)$ if $k<\frac n2$, $c_k=0$ and $h_k(x)=\log|x|+O(1)$ if $k=\frac n2$. 
Assume that $\Omega$ is $(k-1)$-convex, we proved the existence  and  uniqueness  of $C^{1,1}$ solution to \eqref{GMZRealinterior}. Moreover the solution can be controlled pointwisely by fundamental solutions of  homogenous k-Hessian equations up to the second order. If $\Omega$ is also starshaped with respect to the origin, we proved the positive lower bound of the gradient of the solution and then we show a nearly monotonicity formula along the level set of the approximating solution.

\subsection{Our result}
In this section, we consider the following problem for complex $k$-Hessian equation
\begin{equation}\label{maineq} 
\begin{cases}
(dd^cu)^k\wedge \omega^{n-k}=0&\quad\text{in }\Omega\setminus\{0\},\\
u=-1&\quad\text{on }\partial\Omega,\\
u(z)=-|z|^{2-\frac{2n}k}+O(1)&\quad\text{as }z\rightarrow 0.
\end{cases}
\end{equation}

\begin{theorem}\label{mainthm}
Assume $1\leq k<n$. Let $\Omega$ be a smooth $(k-1)$-pseudoconvex domain containing the origin. Then there exists a unique $k$-subharmonic solution $u$ of \eqref{maineq} in $C^{1,\alpha}(\overline\Omega\setminus \{0\})$. Moreover, $u$ satisfies the estimate
\begin{align}
&-C\leq u+|z|^{2-\frac{2n}k}\leq 0,\label{mainest1}\\
&|Du|+|z||\Delta u|\leq C|z|^{1-\frac{2n}k}\label{mainest2}.
\end{align}
\end{theorem}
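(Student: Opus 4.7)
The plan is to mirror the approximation strategy used by the authors for the real analogue in \cite{GMZRealinterior}. For $0<\epsilon\ll 1$, set $\Omega_\epsilon:=\Omega\setminus\overline{B_\epsilon(0)}$ and consider the two-boundary Dirichlet problem
\[
(dd^c u_\epsilon)^k\wedge\omega^{n-k}=0\ \text{in}\ \Omega_\epsilon,\quad u_\epsilon=-1\ \text{on}\ \partial\Omega,\quad u_\epsilon=-\epsilon^{2-2n/k}\ \text{on}\ \partial B_\epsilon,
\]
possibly further perturbing the right-hand side by $\delta\omega^n$ and letting $\delta\to 0$ first to invoke the nondegenerate existence theory. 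The inner boundary $\partial B_\epsilon$ is strictly pseudoconvex and $\partial\Omega$ is $(k-1)$-pseudoconvex, so existence will follow from the subsolution method for the complex $k$-Hessian equation of \cite{LiSongying2004} once an admissible subsolution is produced. For this I propose
\[
\underline u_\epsilon(z):=\max\bigl\{-|z|^{2-2n/k}-A,\ -1+B\rho(z)\bigr\},
\]
where $\rho<0$ is a strictly $k$-plurisubharmonic defining function of $\Omega$ (afforded by the $(k-1)$-pseudoconvexity) and $A,B$ are chosen large enough so that the radial piece realises the boundary value on $\partial B_\epsilon$ while the $\rho$-piece realises it on $\partial\Omega$. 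The max of two admissible functions is admissible, and $\underline u_\epsilon$ is uniformly comparable to $-|z|^{2-2n/k}$ independently of $\epsilon$.

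The $C^0$ bound \eqref{mainest1} then follows from the comparison principle, using $\underline u_\epsilon$ from below and a suitable translate of $-|z|^{2-2n/k}$ (adjusted upward to meet $-1$ on $\partial\Omega$) from above. For \eqref{mainest2} I exploit the invariance of the operator under the rescaling $v(w):=r^{2n/k-2}u(rw)$, which also preserves the singular asymptotic and, because $\Gamma_k$ is a positive cone, sends admissible functions to admissible ones. At a point $z_0$ with $|z_0|=r$ small, rescale on the annulus $\{r/2\leq|z|\leq 2r\}$ (and, near $\partial B_\epsilon$, rescale with $r=\epsilon$): the rescaled function lies on a fixed annulus with $C^0$ norm $O(1)$, and standard interior first- and second-order estimates for the homogeneous complex $k$-Hessian equation, combined with barrier arguments against $\underline u_\epsilon$, give $|Dv|,|D^2v|=O(1)$, which rescales back to \eqref{mainest2}. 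Boundary gradient estimates at $\partial\Omega$ follow routinely from the barrier $B\rho$. The hard part of this step, and the main obstacle of the whole paper, is the uniform Hessian bound $|z||\Delta u_\epsilon|\leq C|z|^{1-2n/k}$: the limiting equation is degenerate, so the only viable route is a Pogorelov-type maximum principle applied to a carefully chosen scale-invariant auxiliary quantity localised against $\underline u_\epsilon$, together with matching uniform second-order boundary estimates at $\partial\Omega$ (using its $(k-1)$-pseudoconvexity) and at $\partial B_\epsilon$ (using strict pseudoconvexity).

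With the uniform $C^{1,1}_{\mathrm{loc}}(\overline\Omega\setminus\{0\})$ estimates in place, Arzel\`{a}--Ascoli yields a subsequence $u_{\epsilon_j}\to u$ in $C^{1,\alpha}_{\mathrm{loc}}(\overline\Omega\setminus\{0\})$ for every $\alpha\in(0,1)$. Stability of the complex $k$-Hessian operator under this convergence shows that $u$ solves the homogeneous equation, while \eqref{mainest1} transfers to the limit and enforces the singular asymptotic prescribed in \eqref{maineq}. Uniqueness comes from the comparison principle for admissible functions applied on $\Omega\setminus\overline{B_\delta(0)}$ for small $\delta>0$, letting $\delta\to 0$ and exploiting the common asymptotic $-|z|^{2-2n/k}+O(1)$ of any two candidate solutions at the origin.
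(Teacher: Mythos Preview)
Your overall architecture---approximate on $\Omega\setminus\overline{B_r}$, build a subsolution by gluing the radial profile to a defining function, prove uniform $C^0$, $C^1$, $C^2$ bounds with the correct blow-up, pass to the limit, and conclude uniqueness via comparison on shrinking punctured domains---matches the paper. Two points need correction.

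First, a minor one: your $\underline u_\epsilon=\max\{\cdot,\cdot\}$ is only Lipschitz, whereas Li's subsolution method in \cite{LiSongying2004} requires a \emph{smooth} strict subsolution. The paper handles this with P.~Guan's regularized-maximum construction (Lemma~\ref{Guan2002lemma}), and in addition perturbs the radial piece by $a_0|z|^2/R_0^2$ so that it is \emph{strictly} $k$-subharmonic. Without the perturbation, $-|z|^{2-2n/k}$ has $H_k\equiv 0$, so the glued function need not be a strict subsolution of the $\varepsilon$-perturbed equation.

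Second, the substantive gap: your route to \eqref{mainest2} via rescaling plus ``standard interior first- and second-order estimates for the homogeneous complex $k$-Hessian equation'' does not go through. At an interior scale $|z_0|=r$ the rescaled function $v(w)=r^{2n/k-2}u^{\varepsilon,r}(rw)$ solves $H_k[v]=r^{2n}\varepsilon$ on the annulus $\{1/2\le|w|\le2\}$ with \emph{no} boundary control there, and no $\varepsilon$-independent interior $C^2$ estimate is available for this degenerate operator. The paper avoids this entirely: instead of localizing by annuli, it proves \emph{global} Pogorelov-type maximum principles (Theorems~\ref{thm::gradientestimate} and \ref{thm::2ndest}) for the specific weighted quantities
\[
P=|Du|^2(-u)^{-\frac{2n-k}{n-k}},\qquad H=u_{\xi\bar\xi}(-u)^{-\frac{n}{n-k}}(M-P)^{-\sigma},
\]
chosen so that the fundamental solution $-|z|^{2-2n/k}$ renders them bounded and so that, when $f\equiv\varepsilon$, the terms $|D\log f|$ and $|D^2\log f|$ vanish, making the estimates uniform in $\varepsilon$. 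These reduce the global bounds to boundary bounds on $\partial\Omega$ and $\partial B_r$; the rescaling $\tilde u(w)=r^{2n/k-2}u^{\varepsilon,r}(rw)+1$ is then used \emph{only} for the boundary estimates on $\partial B_r$, where genuine boundary data are available. Your sentence ``a Pogorelov-type maximum principle applied to a carefully chosen scale-invariant auxiliary quantity'' is pointing in exactly this direction, but the choice of exponents above is the crux, and it is not supplied.
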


Here $k$-subharmonic function and $(k-1)$-pesudoconvex domain are introduced in Section \ref{sec2}. We suppose $\Omega$ contains the origin and we use the notation $\Omega_r=\Omega\setminus\overline B_r(0)$. We use $B_r$ instead of $B_r(0)$ for short. To prove Theorem \ref{mainthm}, we consider the approximating problem
\begin{equation}\label{eq::approx::sec1}
\begin{cases}
H_k[u^{\varepsilon,r}]=\varepsilon&\quad\text{in }\Omega_r,\\
u=\underline u&\quad\text{on } \partial B_r,
\end{cases}
\end{equation}
where $\underline u$ is a subsolution constructed in Section \ref{sec3}. The solution $u$ to \eqref{maineq} with be obtained by  approximating solution $u^{\varepsilon,r}$ to \eqref{eq::approx::sec1}.  The existence of $u^{\varepsilon,r}$ follows from subsolution method in \cite{LiSongying2004}. 

The rest of the paper is organized as follows. In Section \ref{sec2}, we first give the definition and some notations. Then we recall some new gradient estimates and complex Hessian estimates in \cite{GMZ2022} motivated by B. Guan \cite{GuanBo2007}, which will be used in the proof of \eqref{mainest2}. In Section \ref{sec3}, we establish uniform gradient estimates and complex Hessian estimates. Theorem \ref{mainthm} will be proved in the last section.

\section{Preliminaries}\label{sec2}

\subsection{Elementary symmetric functions}
For any $k=1,\cdots,n$ and $\lambda=(\lambda_1,\cdots,\lambda_n)\in\mathbb R^n$, the $k$-th elementary symmetric function on $\lambda$ is defined by 
$$S_k(\lambda):=\sum\limits_{1\le i_1< \cdots <i_k\le n}\lambda_{i_1}\cdots\lambda_{i_k}.$$
Let $S_k(\lambda|i)$ be the symmetric function with $\lambda_i=0$. Let $A=(a_{ij})\in\mathbb R^{n\times n}$ be an $n\times n$ matrix. Let $S_k(A)$ be the $k$-th elementary symmetric function on $A$, which is the sum of $k\times k$ principal minors of $A$. 
We use the convention that $S_0(A)=1$. It is clear that 
$S_k(A)=S_k(\lambda(A))$, where $\lambda(A)$ are the eigenvalues of $A$.

The elementary symmetric functions have  the following simple properties from \cite{Lieberman2005}.
\begin{align}
S_k(\lambda)=S_k(\lambda| i)+\lambda_i S_{k-1}(\lambda| i),
\end{align}
and 
\begin{align}
\sum_{i=1}^nS_{k}(\lambda| i)=(n-k)S_k(\lambda).
\end{align}
Recall the $\Gamma_k$-cone is defined by
\begin{align*}
	\Gamma_{k}:=\{\lambda\in \mathbb{R}^n\mid S_{i}(\lambda)>0, 1\le i\le k\}
\end{align*}
For $\lambda\in \Gamma_k$ and $1\leq l\leq k$, the well-known MacLaurin inequality (see \cite{Lieberman2005}) says
$$\bigg(\frac{S_k(\lambda)}{C_n^k}\bigg)^\frac1k\leq \bigg(\frac{S_l(\lambda)}{C_n^l}\bigg)^\frac1l.$$ 
One can find the concavity property of $S_k^{\frac{1}{k}}$  in \cite{CNS3}.
\begin{proposition}\label{concavity}
$S_k^{\frac{1}{k}}$ is a concave function in $\Gamma_k$. 
\end{proposition}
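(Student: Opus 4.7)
My plan is to prove concavity of $F:=S_k^{1/k}$ on $\Gamma_k$ by establishing the equivalent superadditivity property
\begin{equation*}
F(\lambda+\mu)\;\geq\; F(\lambda)+F(\mu)\qquad\text{for all }\lambda,\mu\in\Gamma_k.
\end{equation*}
The equivalence holds because $F$ is positively $1$-homogeneous on the open convex cone $\Gamma_k$: concavity at the midpoint $(\lambda+\mu)/2$ combined with homogeneity gives superadditivity, and conversely superadditivity plus homogeneity gives concavity at every convex combination. The strategy is then to reduce superadditivity to a one-variable polynomial factorization via hyperbolicity of $S_k$, and to close with an AM--GM step.

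Fix $\lambda,\mu\in\Gamma_k$ and introduce $q(t):=S_k(\mu+t\lambda)$, a polynomial of degree $k$ in $t$ with leading coefficient $S_k(\lambda)>0$ and $q(0)=S_k(\mu)>0$. The essential input is that $q$ has $k$ real roots in $t$, i.e.\ the hyperbolicity of $S_k$ in the direction $\lambda$. This follows from G\aa rding's cone theorem once one checks that $S_k$ is hyperbolic with respect to $e=(1,\ldots,1)$ with hyperbolicity cone $\Gamma_k$; the hyperbolicity in the direction $e$ is immediate from the identity
\begin{equation*}
S_k(\xi+te)\;=\;\frac{1}{(n-k)!}\,\frac{d^{n-k}}{dt^{n-k}}\!\prod_{i=1}^n(t+\xi_i),
\end{equation*}
since the right-hand side is the $(n-k)$-th derivative of a polynomial with $n$ real roots, which by iterated Rolle has $k$ real roots. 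Convexity of $\Gamma_k$ further gives $\mu+t\lambda\in\Gamma_k$ and thus $q(t)>0$ for every $t\geq 0$, so all $k$ roots of $q$ lie in $(-\infty,0)$. Consequently we may factor
\begin{equation*}
q(t)\;=\;S_k(\lambda)\prod_{j=1}^k(t+s_j),\qquad s_j>0,
\end{equation*}
and evaluating at $t=0$ and $t=1$ yields $S_k(\mu)=S_k(\lambda)\prod s_j$ and $S_k(\lambda+\mu)=S_k(\lambda)\prod(1+s_j)$.

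The proof then reduces to the elementary inequality
\begin{equation*}
\prod_{j=1}^k(1+s_j)^{1/k}\;\geq\;1+\prod_{j=1}^k s_j^{1/k}\qquad(s_j\geq 0),
\end{equation*}
which follows by applying AM--GM to $x_j:=1/(1+s_j)$ and $y_j:=s_j/(1+s_j)$: the bounds $\prod x_j^{1/k}\leq \tfrac{1}{k}\sum x_j$ and $\prod y_j^{1/k}\leq \tfrac{1}{k}\sum y_j$ sum to $\prod x_j^{1/k}+\prod y_j^{1/k}\leq 1$ (since $x_j+y_j=1$), and clearing the common denominator $\prod(1+s_j)^{1/k}$ gives the claim. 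Substituting the factored expressions produces $S_k(\lambda+\mu)^{1/k}\geq S_k(\lambda)^{1/k}+S_k(\mu)^{1/k}$, i.e.\ the desired superadditivity.

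The main obstacle will be justifying the G\aa rding hyperbolicity propagation from the direction $e$ to an arbitrary direction $\lambda\in\Gamma_k$; this is classical but is the only nontrivial analytic ingredient, and requires identifying $\Gamma_k$ with the connected component of $\{S_k>0\}$ containing $e$. A self-contained alternative avoiding G\aa rding's theorem is to verify concavity by a direct second-derivative computation: on $\Gamma_k$ one has $F_{ij}=\tfrac{1}{k}S_k^{1/k-1}S_{k,ij}-\tfrac{k-1}{k^2}S_k^{1/k-2}S_{k,i}S_{k,j}$, and concavity of $F$ is equivalent to the pointwise quadratic-form inequality $k\,S_k\sum_{i,j}S_{k,ij}\eta_i\eta_j\leq (k-1)\bigl(\sum_i S_{k,i}\eta_i\bigr)^{2}$ for every $\eta\in\mathbb R^n$, which can be verified by induction on $k$ using only the identities and Newton--MacLaurin inequalities recalled above.
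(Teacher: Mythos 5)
Your argument is correct, and it recovers the classical G\aa rding-style proof of this concavity. The paper itself gives no proof for Proposition \ref{concavity} --- it simply cites \cite{CNS3}, and the argument in that reference rests on the same theory of hyperbolic polynomials you invoke --- so there is no competing in-paper argument to compare against.

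A few remarks on the details. The reduction to superadditivity via positive $1$-homogeneity is standard and stated correctly in both directions. The derivative identity $S_k(\xi+te)=\frac{1}{(n-k)!}\frac{d^{n-k}}{dt^{n-k}}\prod_i(t+\xi_i)$ is right, and iterated Rolle does give $k$ real roots, which establishes hyperbolicity in the direction $e$. The AM--GM step is clean and closes the argument correctly once you have the factorization $q(t)=S_k(\lambda)\prod_j(t+s_j)$ with $s_j>0$. As you correctly flag, the real content lies in propagating hyperbolicity from $e$ to an arbitrary $\lambda\in\Gamma_k$, which requires (i) G\aa rding's theorem that the hyperbolicity cone $\Gamma(S_k,e)$ consists of hyperbolic directions, and (ii) the identification $\Gamma(S_k,e)=\Gamma_k$. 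Point (ii) is usually dispatched by noting $\Gamma_k$ is an open convex cone contained in $\{S_k>0\}$ and containing $e$, and that on $\partial\Gamma_k$ the first symmetric function to vanish is $S_k$ (via the Newton--MacLaurin inequalities), so $\Gamma_k$ is exactly the component of $\{S_k>0\}$ through $e$. It would strengthen the writeup to say this explicitly rather than leave it as a remark. Your closing paragraph sketching a second-derivative verification of $k\,S_k\sum S_{k,ij}\eta_i\eta_j\le(k-1)\bigl(\sum S_{k,i}\eta_i\bigr)^2$ is a legitimate alternative route, but as written it is only a pointer --- the induction over $k$ is precisely where the work lies, and the two elementary identities recalled in the paper are not enough on their own to carry it out --- so the G\aa rding argument should remain your main proof.
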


\subsection{$k$-subharmonic solutions}
In this section we give the definition of $k$-subharmonic functions and definition of $k$-pseudoconvex domains.
One can see the lecture notes by Wang \cite{WangXujia2009} for more properties of the $k$-Hessian operator, and see B\l ocki \cite{Blocki2005} for those of the complex $k$-Hessian operator.
We following the definition by B\l ocki \cite{Blocki2005} to give the definition of $k$-subharmonic functions.
\begin{definition}
Let $\alpha$ be a real $(1,1)$-form in $U$, a domain of $\mathbb C^n$. We say that $\alpha$ is $k$-positive in $U$ if the following inequalities hold
$$\alpha^j\wedge \omega^{n-j}\geq 0, \forall\,j=1,\cdots,k.$$
\end{definition}

\begin{definition}
	Let $U$ be a domain in $\mathbb C^n$. 
	
(1). A function $u:U\rightarrow \mathbb R\cup \{-\infty\}$ is called $k$-subharmonic if it is subharmonic and for all $k$-positive real $(1,1)$-form $\alpha_1,\cdots,\alpha_{k-1}$ in $U$,
$$dd^cu\wedge \alpha_1\wedge \cdots \wedge \alpha_{k-1}\wedge \omega^{n-k}\geq 0.$$
The class of all $k$-subharmonic functions in $U$ will be denoted by $\mathcal{SH}_k(U)$.

(2). A function $u\in C^2 (U)$ is called  \emph{$k$-subharmonic} (\emph{strictly $k$-subharmonic}) if $\lambda\big(\frac{\partial^2u}{\partial z_i\partial \bar z_j}\big)\in \overline \Gamma_{k}$ \ ($\lambda\big(\frac{\partial^2u}{\partial z_i\partial \bar z_j}\big)\in\Gamma_k$).
\end{definition}
If $u\in \mathcal{SH}_{k}(U)\cap C(U)$, $(dd^{c}u)^{k}\wedge \omega^{n-k}$ is well defined in pluripotential theory by B\l ocki \cite{Blocki2005}. We need the following comparison principle by B\l ocki\cite{Blocki2005} to prove the uniqueness of the continuous solution of the problem \eqref{maineq}.
\begin{lemma}\label{comparison0718}
Let $U$ be a bounded domain in $\mathbb C^n$,  $u, v\in \mathcal{SH}_k(U)\cap C(\overline U)$ satisfy
	\begin{align*}
		\begin{cases}
			(dd^cu)^k\wedge \omega^{n-k}\ge (dd^cv)^k\wedge \omega^{n-k}&\quad \text{in}\ U,\\
			u\le v &\quad\text{on}\ \p U.
		\end{cases}
	\end{align*}
Then $u\le v$ in $U$.
\end{lemma}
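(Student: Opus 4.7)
The plan is to argue by contradiction following the standard Bedford--Taylor scheme, extended to the $k$-Hessian setting by B\l ocki. The key observation is that the weak pointwise inequality of Hessian measures in the hypothesis is not quite enough on its own to contradict an integration by parts on the contact set; one first perturbs $u$ so that the inequality becomes strict by a positive multiple of $\omega^n$.

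Concretely, I would fix $R^{2}>\sup_{\overline U}|z|^{2}$ and, for each small $\epsilon>0$, set $\tilde u:=u+\epsilon(|z|^{2}-R^{2})$. Since $\omega=dd^{c}|z|^{2}$ is $k$-positive, the function $\tilde u$ lies in $\mathcal{SH}_{k}(U)\cap C(\overline U)$ and satisfies $\tilde u<u\le v$ on $\partial U$. Expanding $(dd^{c}\tilde u)^{k}\wedge\omega^{n-k}=(dd^{c}u+\epsilon\omega)^{k}\wedge\omega^{n-k}$ via the binomial formula and discarding the intermediate mixed products (which are well defined and non-negative by B\l ocki's pluripotential theory in $\mathcal{SH}_{k}$) yields
\begin{equation*}
(dd^{c}\tilde u)^{k}\wedge\omega^{n-k}\;\ge\;(dd^{c}v)^{k}\wedge\omega^{n-k}+\epsilon^{k}\,\omega^{n}.
\end{equation*}
It therefore suffices to prove $\tilde u\le v$ in $U$ for every small $\epsilon>0$; letting $\epsilon\to 0$ then gives the desired conclusion.

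Next I would suppose for contradiction that $G:=\{\tilde u>v\}$ is nonempty. Continuity of $\tilde u,v$ and the inequality $\tilde u<v$ on $\partial U$ force $G\subset\subset U$ with $\tilde u=v$ on $\partial G$. Using the telescoping identity
\begin{equation*}
(dd^{c}\tilde u)^{k}-(dd^{c}v)^{k}\;=\;dd^{c}(\tilde u-v)\wedge\sum_{j=0}^{k-1}(dd^{c}\tilde u)^{j}\wedge(dd^{c}v)^{k-1-j},
\end{equation*}
wedging with $\omega^{n-k}$, and integrating over $G$, Stokes' theorem for currents together with $\tilde u=v$ on $\partial G$ and $\tilde u\ge v$ in $G$ produces a non-positive boundary contribution, whence
\begin{equation*}
\int_{G}(dd^{c}\tilde u)^{k}\wedge\omega^{n-k}\;\le\;\int_{G}(dd^{c}v)^{k}\wedge\omega^{n-k}.
\end{equation*}
Combining this inequality with the strict pointwise lower bound from the previous step forces $\epsilon^{k}\int_{G}\omega^{n}\le 0$, which is impossible for nonempty open $G$.

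The principal technical hurdle is the rigorous justification of the integration-by-parts step for merely continuous $k$-subharmonic $\tilde u,v$ on a domain whose boundary is only a level set of $\tilde u-v$. The standard remedy, due to B\l ocki \cite{Blocki2005} in exact parallel with Bedford--Taylor's theory for $k=n$, is to approximate $\tilde u,v$ from above by smooth $k$-subharmonic functions, invoke the weak continuity of the mixed $k$-Hessian operators along such sequences, and handle the possibly irregular $\partial G$ by passing to a nearby smooth sublevel set $\{\tilde u-v>\delta\}$ and sending $\delta\to 0^{+}$. Once this pluripotential ingredient is in hand, the remainder is a routine execution of the Bedford--Taylor comparison argument in the $k$-Hessian framework.
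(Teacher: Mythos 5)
The paper does not supply a proof of this lemma: it is stated as a known result and attributed directly to B\l ocki~\cite{Blocki2005}, then used as a black box for the uniqueness argument. Your outline reproduces B\l ocki's argument (the Bedford--Taylor comparison scheme carried over to $\mathcal{SH}_k$): the $\epsilon(|z|^2-R^2)$ perturbation to force a strict gap on $\partial U$ and a strict $\epsilon^k\omega^n$ gap between the Hessian measures, the reduction to the integral comparison $\int_G(dd^c\tilde u)^k\wedge\omega^{n-k}\le\int_G(dd^cv)^k\wedge\omega^{n-k}$ on $G=\{\tilde u>v\}$, and the passage $\epsilon\to 0$. This is correct and is exactly the approach of the cited reference; the one place you rightly flag as nontrivial --- justifying the telescoping/Stokes step for continuous $k$-subharmonic functions on the non-smooth set $G$ via regularization from above and weak continuity of the mixed $k$-Hessian currents --- is in fact the main technical content of B\l ocki's theorem, so it is appropriate that you delegate it rather than prove it anew.
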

\subsection{Gradient estimates and complex Hessian estimates}
Motivated by \cite{GuanBo2007}, we proved the following new gradient estimates and complex Hessian estimates in \cite{GMZ2022}.
\begin{theorem}\label{thm::gradientestimate}
Let $u\in C^3(U)\cap C^1(\overline U)\cap \mathcal{SH}_k(U)$ be a  negative  solution to $H_k[u]=f$  in $U$,  where $f\in C^1(\overline U)$ is positive. Denote by 
$$P=|Du|^2(-u)^{-\frac{2n-k}{n-k}}.$$
Then
\begin{align}\label{eq::est::gradient}
\max_{\overline U}P\leq \max\bigg\{\max_{\partial U}P,\max_{\overline U}\bigg(\frac{2(n-k)}{k(2n-k)}\bigg)^2(-u)^{-\frac k{n-k}}|D\log f|^2\bigg\}
\end{align}
\end{theorem}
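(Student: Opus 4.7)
The plan is to apply the maximum principle to the auxiliary function
\begin{align*}
Q := \log\Bigl(\sum_m u_m u_{\bar m}\Bigr) - \mu\log(-u), \qquad \mu := \frac{2n-k}{n-k},
\end{align*}
which differs from $\log P$ by an additive constant. If the maximum of $Q$ over $\overline U$ is attained on $\partial U$, the first alternative in \eqref{eq::est::gradient} holds. Otherwise it is attained at an interior point $z_0\in U$. Since $H_k[u]=f>0$ and $u\in \mathcal{SH}_k(U)$, the eigenvalues of $(u_{i\bar j})$ lie in $\Gamma_k$, so the linearized operator $F^{i\bar j} := \partial H_k/\partial u_{i\bar j}$ is Hermitian positive definite at $z_0$, and $F^{i\bar j}Q_{i\bar j}(z_0)\leq 0$.

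Writing $W := \sum_m u_m u_{\bar m}$, the first-order condition at $z_0$ reads $W_i = \mu W u_i/u$, which gives $F^{i\bar j}W_iW_{\bar j}/W^2 = \mu^2 F^{i\bar j}u_iu_{\bar j}/u^2$. Combined with the Euler identity $F^{i\bar j}u_{i\bar j} = kH_k[u] = kf$, the inequality $F^{i\bar j}Q_{i\bar j}(z_0)\leq 0$ reduces to
\begin{align*}
\frac{F^{i\bar j}W_{i\bar j}}{W} \leq \mu(\mu-1)\frac{F^{i\bar j}u_iu_{\bar j}}{u^2} + \frac{\mu k f}{u}.
\end{align*}
Differentiating the equation $H_k[u]=f$ in $z_m$ and $\bar z_m$ gives $F^{i\bar j}u_{i\bar j m} = f_m$ and $F^{i\bar j}u_{i\bar j \bar m} = f_{\bar m}$, so expanding $W_{i\bar j}$ yields
\begin{align*}
F^{i\bar j}W_{i\bar j} = 2\mathrm{Re}\Bigl(\sum_m u_m f_{\bar m}\Bigr) + F^{i\bar j}\sum_m\bigl(u_{mi}u_{\bar m\bar j} + u_{m\bar j}u_{\bar m i}\bigr),
\end{align*}
where the second term is a manifestly nonnegative sum of $F$-weighted Hermitian squares.

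The main obstacle is extracting the \emph{sharp} constant $\bigl(\frac{2(n-k)}{k(2n-k)}\bigr)^2 = \frac{4}{(\mu k)^2}$. The strategy is to invoke the first-order condition a second time in a frame diagonalizing $u_{i\bar j}$: the relation $W_i = \mu Wu_i/u$ rearranges to $\sum_m u_{mi}u_{\bar m} = u_i(\mu W/u - \lambda_i)$, and Cauchy-Schwarz yields the pointwise bound $\sum_m|u_{mi}|^2 \geq |u_i|^2(\mu W/u - \lambda_i)^2/W$. Weighting by the diagonal entries $S_{k-1}(\lambda|i)$ of $F^{i\bar j}$ and summing over $i$ produces an additional $\mu^2 F^{i\bar j}u_iu_{\bar j}/u^2$ contribution in the lower bound for $F^{i\bar j}W_{i\bar j}/W$. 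Substituting this back flips the coefficient of the $F^{i\bar j}u_iu_{\bar j}/u^2$ term to the favorable sign $+\mu$, which one discards by nonnegativity. A final Cauchy-Schwarz on the cross term, $|2\mathrm{Re}(\sum_m u_m f_{\bar m})|\leq \frac{1}{2}|Du||Df|$, then closes the estimate in the form $|Du|^2\leq \frac{4}{(\mu k)^2}u^2|D\log f|^2$ at $z_0$. Multiplying through by $(-u)^{-\mu}$ and using the identity $\mu - k/(n-k) = 2$ recovers the second alternative in \eqref{eq::est::gradient}. The delicate point is that $\mu = (2n-k)/(n-k)$ is the unique exponent for which the absorption step produces the sharp coefficient $\frac{4}{(\mu k)^2}$; any other choice either leaves an uncontrolled $F^{i\bar j}u_iu_{\bar j}/u^2$ term or yields a strictly larger constant.
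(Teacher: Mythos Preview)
The paper does \emph{not} contain its own proof of this theorem: it is quoted as a preliminary result from the authors' earlier paper \cite{GMZ2022} (see the sentence preceding Theorem~\ref{thm::gradientestimate}). So there is no in-paper argument to compare your proposal against; what follows is an assessment of your sketch on its own merits.

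Your overall architecture is the standard one and is correctly set up: the test function $Q=\log W-\mu\log(-u)$, the linearized operator $F^{i\bar j}$, the first-order relation $W_i=\mu Wu_i/u$, the Euler identity $F^{i\bar j}u_{i\bar j}=kf$, and the differentiated equation $F^{i\bar j}u_{i\bar jm}=f_m$ are all used in the right places, and the algebra that turns the final inequality into $P\le\bigl(\tfrac{2}{\mu k}\bigr)^2(-u)^{-k/(n-k)}|D\log f|^2$ (with $\mu-\tfrac{k}{n-k}=2$) is correct.

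The step that is not justified as written is the assertion that Cauchy--Schwarz on the first-order condition ``produces an additional $\mu^2 F^{i\bar j}u_iu_{\bar j}/u^2$ contribution''. In the frame diagonalizing $(u_{i\bar j})$, your Cauchy--Schwarz gives
\[
\sum_m|u_{mi}|^2 \;\ge\; \frac{|u_i|^2}{W}\Bigl(\frac{\mu W}{u}-\lambda_i\Bigr)^2
\;=\;|u_i|^2\Bigl(\frac{\mu^2}{u^2}-\frac{2\mu\lambda_i}{uW}+\frac{\lambda_i^2}{W^2}\Bigr),
\]
and after weighting by $S_{k-1}(\lambda|i)$ and summing, the cross term
\[
-\frac{2\mu}{uW}\sum_i S_{k-1}(\lambda|i)\,\lambda_i\,|u_i|^2
\]
does not have a definite sign (since $\lambda_i$ can be negative in $\Gamma_k$), so you cannot simply extract the clean coefficient $\mu^2/u^2$ as claimed. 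Without this, the ``flip to $+\mu$'' does not go through, and the bad term $\mu(\mu-1)F^{i\bar j}u_iu_{\bar j}/u^2$ is not absorbed. You need either (i) to control the cross term, for instance by also exploiting the second half of $T$ (the $\sum_m|u_{m\bar i}|^2$ contribution, which you discard) together with the structure of $S_{k-1}(\lambda|i)\lambda_i=S_k(\lambda)-S_k(\lambda|i)$, or (ii) a different route to the absorption; as it stands this is a genuine gap in the sketch, precisely at the point that determines the sharp constant.
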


\begin{theorem}\label{thm::2ndest}
Let $u\in C^4(U)\cap C^2(\overline U)\cap \mathcal{SH}_k(U)$ be a negative solution to $H_k[u]=f$ in $U$, where $f\in C^2(\overline U)$ is positive. Assume that $P=|Du|^2(-u)^{-\frac{2n-k}{n-k}}$, $(-u)^{-\frac{k}{n-k}}|D\log f|^2$ and $(-u)^{-\frac{k}{n-k}}|D^2\log f|$ are bounded. Denote by 
$$H=u_{\xi\bar\xi}(-u)^{-\frac{n}{n-k}}(M-P)^{-\sigma},$$
where $M=2\max\limits_{\overline U}P+1$, $\sigma\leq \frac{n(n-k)}{8(2n-k)^2}$.
Then we have 
\begin{align}\label{eq::est::2nd}
\max_{\overline U}H\leq C+\max_{\partial U}H,
\end{align}
where $C$ is a positive constant depending only on $n$, $k$, $P$, $(-u)^{-\frac{k}{n-k}}|D\log f|^2$ and $(-u)^{-\frac{k}{n-k}}|D^2\log f|$.
\end{theorem}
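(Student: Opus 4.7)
The plan is a maximum-principle argument for $H$. Regard $\xi$ as a unit vector on the complex unit sphere $S^{2n-1}\subset\mathbb C^n$ and consider $\widetilde H(z) = \max_{|\xi|=1} u_{\xi\bar\xi}(z)\cdot(-u(z))^{-n/(n-k)}(M-P(z))^{-\sigma}$. If the maximum of $\widetilde H$ over $\overline U$ is attained on $\partial U$ we are done, so assume it is attained at an interior point $z_0$ with maximizing direction $\xi_0$. By a unitary change of coordinates I may take $\xi_0 = e_1$ and diagonalize the complex Hessian at $z_0$, $u_{i\bar j}(z_0) = \lambda_i\delta_{ij}$ with $\lambda_1\ge\cdots\ge \lambda_n$ and $\lambda_1 = u_{1\bar 1}(z_0)$; then the linearized operator $F^{i\bar j} := \partial H_k/\partial u_{i\bar j}$ is also diagonal at $z_0$ with $F^{i\bar i} = S_{k-1}(\lambda\mid i)\geq 0$.

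Set $\phi := \log u_{1\bar 1} - \frac{n}{n-k}\log(-u) - \sigma\log(M-P)$. At $z_0$ one has the critical-point conditions $\phi_i = 0$ and $F^{i\bar i}\phi_{i\bar i}\leq 0$. The first yields
\begin{equation*}
\frac{u_{1\bar 1 i}}{u_{1\bar 1}} = \frac{n}{n-k}\,\frac{u_i}{-u} + \frac{\sigma P_i}{M-P},
\end{equation*}
which will be substituted into the second. Expanding $F^{i\bar i}\phi_{i\bar i}\leq 0$ requires three ingredients: (i) the twice-differentiated equation $F^{i\bar i}u_{1\bar 1 i\bar i} = f_{1\bar 1} - F^{i\bar j,p\bar q}u_{i\bar j 1}u_{p\bar q\bar 1}$ (partial derivatives commute in $\mathbb C^n$), together with the standard consequence $-F^{i\bar j,p\bar q}u_{i\bar j 1}u_{p\bar q\bar 1}\geq -|F^{i\bar j}u_{i\bar j 1}|^{2}/H_k[u]$ of the concavity of $H_k^{1/k}$ (Proposition \ref{concavity}); (ii) the identity $F^{i\bar i}u_{i\bar i}=kf$ to handle the $\log(-u)$ piece; and (iii) the crucial positive ``good term'' arising from
\begin{equation*}
F^{i\bar i}(|Du|^2)_{i\bar i} = 2\,\mathrm{Re}\bigl(f_m u_{\bar m}\bigr) + \sum_{i,m} F^{i\bar i}\bigl(|u_{im}|^2 + |u_{i\bar m}|^2\bigr),
\end{equation*}
whose last two sums are nonnegative and contain, in particular, an $F^{1\bar 1}\lambda_1^2$ contribution.

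Collecting terms, substituting the critical-point formula, and using the assumed bounds on $P$ and on $(-u)^{-k/(n-k)}|D^j\log f|$ for $j=1,2$, I expect an inequality of the schematic form
\begin{equation*}
0 \geq c_0\sigma\, u_{1\bar 1}^2\,(-u)^{-(2n-k)/(n-k)}(M-P)^{-1} - C\, u_{1\bar 1}(-u)^{-n/(n-k)} - C,
\end{equation*}
with $c_0>0$ depending on $n,k$ and $C$ on the data listed in the statement. Dividing by $u_{1\bar 1}(-u)^{-n/(n-k)}$ then yields $H(z_0)\leq C$, which is the claimed bound. The main obstacle is the delicate Cauchy--Schwarz bookkeeping required after squaring the critical-point identity and summing against $F^{i\bar i}$: one must distribute the loss so that a positive fraction of $\sigma\sum_i F^{i\bar i}|P_i/(M-P)|^2$ together with the quadratic ``good term'' $\sigma\sum_{i,m}F^{i\bar i}|u_{im}|^2(-u)^{-(2n-k)/(n-k)}/(M-P)$ survives and dominates every negative contribution. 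The explicit constraint $\sigma\leq n(n-k)/(8(2n-k)^2)$ is dictated precisely by this balancing, combined with the normalization $P\leq (M-1)/2$ that follows from the choice $M=2\max_{\overline U}P+1$.
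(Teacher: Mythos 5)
The paper does not prove Theorem~\ref{thm::2ndest}; it is quoted from the authors' earlier work~\cite{GMZ2022} (the text before the statement reads ``we proved the following \dots in~\cite{GMZ2022}''). So there is no proof in the present source to compare against, and your proposal must be judged on its own merits.

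The framework you describe --- maximizing $u_{\xi\bar\xi}(-u)^{-n/(n-k)}(M-P)^{-\sigma}$ over $\xi$ and $z$, passing to $\phi=\log u_{1\bar 1}-\tfrac n{n-k}\log(-u)-\sigma\log(M-P)$, using $\phi_i=0$, $F^{i\bar i}\phi_{i\bar i}\le 0$, concavity of $H_k^{1/k}$, and the differentiated equation to generate a good term from $F^{i\bar i}(|Du|^2)_{i\bar i}$ --- is indeed the standard Guan-type scheme that the paper cites as its motivation, so the outline points in the right direction. However, the crucial step is left as ``expected,'' and as written the expected inequality cannot be correct. The only $F^{1\bar 1}\lambda_1^2$ contribution inside $\sigma\,F^{i\bar i}P_{i\bar i}/(M-P)$ carries the factor $F^{1\bar 1}=S_{k-1}(\lambda\,|\,1)$, which is the \emph{smallest} of the $F^{i\bar i}$ and may degenerate; so the constant $c_0$ in your good term $c_0\sigma\,u_{1\bar1}^2(-u)^{-(2n-k)/(n-k)}(M-P)^{-1}$ cannot depend only on $n,k$ without the additional case analysis (large vs.\ small $\lambda_n$) that is precisely what makes this lemma delicate. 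Moreover, dividing your schematic inequality by $u_{1\bar1}(-u)^{-n/(n-k)}$ produces $u_{1\bar1}(-u)^{-1}(M-P)^{-1}\lesssim 1+\ldots$, not $H(z_0)\le C$: in fact $u_{1\bar1}^2(-u)^{-(2n-k)/(n-k)}(M-P)^{-1}=H^2(M-P)^{2\sigma-1}(-u)^{k/(n-k)}$, so an extra weight $(-u)^{k/(n-k)}$ appears and must be matched by the same weight on the bad terms (this is exactly why the hypotheses carry the factors $(-u)^{-k/(n-k)}$ in front of $|D\log f|^2$ and $|D^2\log f|$). Finally, there is a sign error in the critical-point relation (it should read $u_{1\bar1 i}/u_{1\bar1}=-\tfrac n{n-k}\,u_i/(-u)-\sigma P_i/(M-P)$), and the concavity inequality you invoke should carry the factor $\tfrac{k-1}{k}$, i.e.\ $-H_k^{i\bar j,p\bar q}u_{i\bar j1}u_{p\bar q\bar1}\ge -\tfrac{k-1}{k}\,|H_k^{i\bar j}u_{i\bar j1}|^2/H_k$. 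These are minor slips, but the central bookkeeping --- distributing the loss from squaring $\phi_i=0$ against the good terms, with the two-case split on the eigenvalues, and tracking where $\sigma\le n(n-k)/(8(2n-k)^2)$ enters --- is the actual proof and is not present.
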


We need the following  lemma by P. Guan\cite{GuanPengfei2002} to construct the subsolution of the complex $k$-Hessian equation in a ring domain.
\begin{lemma}\label{Guan2002lemma}
	Suppose that $U$ is a bounded smooth domain in $\mathbb{C}^n$. For $h, g\in C^m(U)$, $m\ge 2$, for all $\delta>0$, there is an $H\in C^m(U)$ such that
	
	\begin{enumerate}[(1)]
		\item
		$H\ge \max\{h, g\}$ \ \text{and}
		
		\begin{align*}
			H(z)=\left\{ {\begin{array}{*{20}c}
					{h(z), \quad   \text{if } \ h(z)-g(z)>\delta  }, \\
					g(z) , \ \quad \text{if } \ g(z)-h(z)>\delta;\\
			\end{array}} \right.
		\end{align*}
		\item
		{There exists}  $|t(z)|\le 1 $ {such that}
		
		\begin{align*}
			\left\{H_{i\bar j}(z)\right\}\ge
			\left\{\frac{1+t(z)}{2}g_{i\bar j}+\frac{1-t(z)}{2}h_{i\bar j}\right\},\ \text{for all} \ x\in\left\{|g-h|<\delta\right\}.
		\end{align*}
	\end{enumerate}
\end{lemma}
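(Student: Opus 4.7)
The plan is to construct $H$ as a smooth regularized maximum of $h$ and $g$, obtained by composing $(h,g)$ with a translation-invariant convex smoothing $M:\mathbb{R}^2\to\mathbb{R}$ of the ordinary maximum. Fix a smooth function $\psi:\mathbb{R}\to\mathbb{R}$ with $\psi''\ge 0$, $\psi(s)\ge |s|$ everywhere, and $\psi(s)=|s|$ for $|s|\ge \delta/2$; such a $\psi$ is produced by convolving $|\cdot|$ with a nonnegative even mollifier of sufficiently small support, which preserves convexity and automatically forces $|\psi'|\le 1$ (since $(|s|)'=\operatorname{sgn}$ has absolute value $1$ a.e.\ and $\int\rho=1$). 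Then set
\[ M(a,b):=\tfrac{a+b}{2}+\psi\!\left(\tfrac{a-b}{2}\right), \qquad H(z):=M(h(z),g(z)). \]

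Property (1) is immediate from the defining properties of $\psi$. Since $\psi(s)\ge|s|$, one has $M(a,b)\ge\tfrac{a+b}{2}+\tfrac{|a-b|}{2}=\max(a,b)$, and hence $H\ge\max\{h,g\}$. If $h(z)-g(z)>\delta$, then $(h-g)/2>\delta/2$ lies in the region where $\psi$ agrees with $|\cdot|$, so $H(z)=\tfrac{h+g}{2}+\tfrac{h-g}{2}=h(z)$; the case $g-h>\delta$ is symmetric.

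For property (2), a direct differentiation of $H(z)=M(h(z),g(z))$ yields
\[ H_{i\bar j}=M_a h_{i\bar j}+M_b g_{i\bar j}+M_{aa}h_i h_{\bar j}+M_{ab}(h_i g_{\bar j}+g_i h_{\bar j})+M_{bb}g_i g_{\bar j}. \]
The translation invariance $M(a+c,b+c)=M(a,b)+c$ forces $M_a+M_b\equiv 1$, hence $M_{aa}+M_{ab}=0=M_{ab}+M_{bb}$, so $M_{aa}=M_{bb}=-M_{ab}=\tfrac{1}{4}\psi''((a-b)/2)\ge 0$. The second-order contribution therefore collapses to the positive semi-definite Hermitian form $\tfrac{1}{4}\psi''\cdot (h_i-g_i)\overline{(h_j-g_j)}$. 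Dropping this nonnegative term and setting
\[ t(z):=M_b-M_a=-\psi'\!\left(\tfrac{h(z)-g(z)}{2}\right) \]
gives $M_a=(1-t)/2$, $M_b=(1+t)/2$, and $|t|\le 1$, which is precisely the inequality claimed in (2).

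The argument is essentially mechanical once the auxiliary function $\psi$ is in hand; the only delicate point is synthesizing the four required properties of $\psi$ (smoothness, convexity, coincidence with $|\cdot|$ outside a $\delta/2$-neighborhood of the origin, and $|\psi'|\le 1$) into a single function. These are all compatible because $|s|$ itself already satisfies the last three, and each is preserved under convolution with a nonnegative even kernel of sufficiently small support.
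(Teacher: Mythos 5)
Your proof is correct, and it is the standard ``regularized maximum'' construction that is used in P.~Guan's original paper (and goes back at least to Demailly). The paper under review simply cites the lemma from \cite{GuanPengfei2002} without reproducing the argument, so there is no in-paper proof to compare against, but your construction $H=\tfrac{h+g}{2}+\psi\bigl(\tfrac{h-g}{2}\bigr)$ with $\psi$ a convex, $1$-Lipschitz mollification of $|\cdot|$ agreeing with $|\cdot|$ outside $[-\delta/2,\delta/2]$ is exactly the expected one. All the verifications check out: $\psi\ge|\cdot|$ by Jensen since $|\cdot|$ is convex and the mollifier is a probability measure; $|\psi'|\le 1$ since $\psi'=\rho*\operatorname{sgn}$; translation invariance gives $M_a+M_b\equiv 1$, whence the Hessian correction term collapses to the rank-one PSD form $\tfrac14\psi''\,(h_i-g_i)\overline{(h_j-g_j)}$, and dropping it yields (2) with $t=M_b-M_a=-\psi'\bigl(\tfrac{h-g}{2}\bigr)\in[-1,1]$. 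One cosmetic remark: to guarantee $\psi(s)=|s|$ for $|s|\ge\delta/2$ exactly (not just $|s|>\delta/2$), take the mollifier supported in the open interval $(-\delta/2,\delta/2)$ or a slightly smaller one; this does not affect anything.
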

We can prove that $H$ is  $k$-subharmonic if $f$ and $g$ are both $k$-subharmonic by the concavity of $S^\frac1k$ in Proposition \ref{concavity}.

At the last of this subsection, we recall the definition of $(k-1)$-pseudoconvex domain.

\begin{definition}
A $C^2$ domain $U$ is called $(k-1)$-pseudoconvex if there is $C_U>0$, such that $\lambda(-d_{i\bar j}+C_U(d^2)_{i\bar j})\in\Gamma_{k}$ on $\partial U$, where $d(z)=\mathrm{dist}(z,\partial U)$ is the distance function from $z$ to $\partial U$.
\end{definition}

\section{Solving the approximating problem in $\Omega\setminus B_r$}\label{sec3}
In this section, we will solve the approximating problem by  a-priori estimates and the subsolution method. Before this, we make an assumption on $\Omega$.
\begin{assumption}\label{assumption3.1}
Assume $\Omega$ contains the origin and $B_{r_0}\subset\subset\Omega\subset\subset B_{(1-\tau_0)R_0}$ for some $\tau_0\in(0,\frac12)$. 
\end{assumption}
Denote by $\Omega^\mu=\{z\in\Omega:d(z)<\mu\}$. In this section, we use $C$ and $c$ with subscript to denote some positve constant which are independent of $\varepsilon$ and $r$.

The following lemma about $(k-1)$-pesudoconvex domain in $\mathbb C^n$ is a parallel version to $(k-1)$-convex domain in $\mathbb R^n$ with can be found in \cite[Section 3]{CNS3}. It plays an important roles in constructing the subsolution.
\begin{lemma}
Let $\Omega$ be a smooth $(k-1)$-pseudoconvex bounded domain. There exists $\mu_0\in (0,\frac1{2C_\Omega})$ small enough such that $B_{r_0}\subset\subset\{z\in\Omega:d(z)>2\mu_0\}$. Moreover $\rho:=-d+C_\Omega d^2$ is smooth and strictly $k$-subharmonic and $H_k[\rho]\geq \epsilon_0$ in $\overline{\Omega^{2\mu_0}}$ for some $\epsilon_0>0$.
\end{lemma}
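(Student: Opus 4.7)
The plan is to prove this lemma by a straightforward combination of regularity of the distance function, the boundary condition built into the definition of $(k-1)$-pseudoconvexity, and a continuity/compactness argument. There is no deep obstacle; rather the content is to arrange several smallness constraints on $\mu_0$ to be simultaneously compatible.

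First I would collect the geometric and regularity constraints on $\mu_0$. Because $\overline{B_{r_0}}\subset\subset\Omega$, the quantity $\delta_1:=\mathrm{dist}(\overline{B_{r_0}},\partial\Omega)>0$. Because $\partial\Omega$ is smooth and compact, a standard tubular neighborhood argument yields $\delta_2>0$ such that the distance function $d(z)=\mathrm{dist}(z,\partial\Omega)$ belongs to $C^\infty$ on $\{z\in\Omega:d(z)<\delta_2\}$. As a first pass, choose
\[
\mu_0<\min\Bigl\{\tfrac{\delta_1}{4},\ \tfrac{\delta_2}{2},\ \tfrac{1}{4C_\Omega}\Bigr\},
\]
which is allowed since the right-hand side is positive. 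With this choice, $\rho=-d+C_\Omega d^2$ is smooth on $\overline{\Omega^{2\mu_0}}$, and the inclusion $\overline{B_{r_0}}\subset\subset\{d>2\mu_0\}$ is immediate from $2\mu_0<\delta_1/2<\delta_1$.

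Next I would establish strict $k$-subharmonicity in a thin tubular neighborhood. The definition of $(k-1)$-pseudoconvex domain says that the Hermitian matrix $(\rho_{i\bar j})=(-d_{i\bar j}+C_\Omega(d^2)_{i\bar j})$ satisfies $\lambda(\rho_{i\bar j})\in\Gamma_k$ on $\partial\Omega$, i.e. $S_j(\lambda(\rho_{i\bar j}))>0$ strictly for all $1\le j\le k$. Since $\rho\in C^2$ near $\partial\Omega$ and the maps $z\mapsto S_j(\lambda(\rho_{i\bar j})(z))$ are continuous, the strict positivity on the compact set $\partial\Omega$ persists in an open tubular neighborhood $\{d<\mu_1\}$ for some $\mu_1>0$. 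After replacing $\mu_0$ by $\min(\mu_0,\mu_1/2)$ (all earlier constraints survive because the bound only gets smaller), we conclude that $\lambda(\rho_{i\bar j})\in\Gamma_k$ pointwise throughout $\overline{\Omega^{2\mu_0}}$, so $\rho$ is strictly $k$-subharmonic there.

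Finally the lower bound $H_k[\rho]\ge\epsilon_0$ follows from compactness: the continuous function $H_k[\rho]=S_k(\lambda(\rho_{i\bar j}))$ is strictly positive on the compact set $\overline{\Omega^{2\mu_0}}$, hence attains a positive minimum $\epsilon_0>0$ there. The only mildly delicate point is the simultaneous bookkeeping of the constraints on $\mu_0$ (distance to $B_{r_0}$, smoothness of $d$, the condition $\mu_0<1/(2C_\Omega)$ from the statement, and the continuity threshold $\mu_1$); each is a positive lower bound, and one simply takes the minimum. No analytic difficulty beyond continuity of eigenvalues is involved.
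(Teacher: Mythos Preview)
Your proof is correct. The paper does not supply its own proof of this lemma; it simply states the result as the complex analogue of the corresponding fact for $(k-1)$-convex domains in $\mathbb{R}^n$ and refers to \cite[Section~3]{CNS3}. Your argument---smoothness of $d$ in a tubular neighborhood of $\partial\Omega$, continuity of $z\mapsto S_j(\lambda(\rho_{i\bar j}(z)))$ to propagate the boundary condition $\lambda(\rho_{i\bar j})\in\Gamma_k$ into a thin collar, and compactness of $\overline{\Omega^{2\mu_0}}$ to extract a positive minimum $\epsilon_0$---is exactly the standard reasoning underlying that citation, so your approach coincides with what the paper invokes.
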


\subsection{The approximating equation.} We will approximate the solution to the homogeneous complex $k$-Hessian equation in $\Omega\backslash\{0\}$ by solutions to a sequence of nongenerate equation in $\Omega_r$. The existance of approximating solution can be obtained if we can construct a smooth subsolution. In the following, we use the technique from P. Guan \cite{GuanPengfei2002} 
to construct a subsolution.

Denote $w:=-|z|^{2-\frac{2n}k}+R_0^{2-\frac{2n}k}-1+a_0\frac{|z|^2}{R_0^2}$, where 
$a_0=\frac12\Big((1-\tau_0)^{2-\frac{2n}k}-1\Big)R_0^{2-\frac{2n}k}$. Then by $\Omega\subset B_{(1-\tau_0)R_0}$, we have 
$$w\leq -\frac12\Big((1-\tau_0)^{2-\frac{2n}k}-1\Big)R_0^{2-\frac{2n}k}-1\quad\text{in } \overline \Omega.$$ 
By Proposition \ref{concavity}, we have
$$H_k^\frac1k[w]\geq H_k^\frac1k[-|z|^{2-\frac{2n}k}]+H_k^\frac1k[a_0\frac{|z|^2}{R_0^2}]= (C_n^k)^\frac1ka_0R_0^{-2}\quad\text{in } \Omega.$$
Then by Lemma \ref{Guan2002lemma}, we can construct a smooth and strictly $k$-subharmonic function $\underline u$ from $w$ and $\rho$.
\begin{lemma}
There is a strictly $k$-subharmonic function $\underline u\in C^\infty(\overline \Omega_r)$  satisfying 
\begin{align*}
\underline u(z)=\begin{cases}
K_0\rho(z)-1\quad\text{if}\quad d(z)\leq \frac{\mu_0}{M_0},\\
w(z)\quad\text{if}\quad d(z)>\mu_0,
\end{cases}
\end{align*}
\begin{align*}
\underline u(z)\geq \max\{K_0\rho(z)-1,w(z)\}\quad\text{if}\quad\frac{\mu_0}{M_0}\leq d(z)\leq \mu_0,
\end{align*}
\begin{align*}
H_k[\underline u]\geq \epsilon_1:=\min\{C_n^ka_0^kR_0^{-2k},K_0^k\epsilon_0\}\quad\text{in}\quad\Omega,
\end{align*}
where $K_0$ and $M_0$ are uniform constants.
\end{lemma}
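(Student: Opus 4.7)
The plan is to apply Lemma \ref{Guan2002lemma} with $h = w$ and $g = K_0\rho - 1$, choosing $K_0$ large so that $g$ dominates $h$ only in a thin tubular neighborhood of $\partial\Omega$ and $h$ dominates $g$ in the bulk. Since $\rho$ is only smooth and strictly $k$-subharmonic in $\overline{\Omega^{2\mu_0}}$, it is essential that the entire gluing region lies inside this neighborhood, which is what fixes the scale $\mu_0/M_0 \leq d \leq \mu_0$ for the transition.

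Concretely, I would first fix a small $\delta > 0$ and choose $K_0$ so large that $K_0\rho - 1 \leq w - \delta$ on $\{d \geq \mu_0\}$: on this set $\rho \leq -\mu_0/2$ (using $\mu_0 < 1/(2C_\Omega)$), so the left side tends to $-\infty$ with $K_0$ while $w$ is bounded on $\overline\Omega$. With $K_0$ fixed, I would then take $M_0$ so large that $K_0\rho - 1 \geq w + \delta$ on $\{d \leq \mu_0/M_0\}$: here $K_0\rho - 1 \geq -K_0\mu_0/M_0 - 1$, while the strict inequality $w \leq -1 - c_1$ with $c_1 := \tfrac12\bigl((1-\tau_0)^{2-2n/k}-1\bigr)R_0^{2-2n/k} > 0$ (which uses $2-2n/k < 0$ since $k<n$) closes the argument for $M_0 \gg K_0\mu_0/c_1$. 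Lemma \ref{Guan2002lemma} then produces the desired $\underline u \in C^\infty(\overline\Omega_r)$ with the piecewise descriptions and $\underline u \geq \max\{K_0\rho - 1, w\}$ throughout.

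It remains to verify $H_k[\underline u] \geq \epsilon_1$ and strict $k$-subharmonicity. On $\{d > \mu_0\}$, $\underline u = w$ gives $H_k[\underline u] \geq C_n^k a_0^k R_0^{-2k}$; on $\{d < \mu_0/M_0\}$, $\underline u = K_0\rho - 1$ gives $H_k[\underline u] \geq K_0^k \epsilon_0$. In the transition region, property (2) of Lemma \ref{Guan2002lemma} yields $\underline u_{i\bar j} \geq \tfrac{1+t}{2}K_0\rho_{i\bar j} + \tfrac{1-t}{2}w_{i\bar j}$, so the concavity of $S_k^{1/k}$ on $\Gamma_k$ (Proposition \ref{concavity}) gives
\begin{align*}
H_k^{1/k}[\underline u] \geq \tfrac{1+t}{2} K_0 H_k^{1/k}[\rho] + \tfrac{1-t}{2} H_k^{1/k}[w] \geq \min\bigl\{K_0\epsilon_0^{1/k},\, (C_n^k)^{1/k} a_0 R_0^{-2}\bigr\},
\end{align*}
which taken to the $k$-th power is exactly $\epsilon_1^{1/k}$-to-the-$k$. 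I do not expect serious technical obstacles: the whole argument rests on the strict separation $w + 1 < 0$ on $\overline\Omega$, which is exactly what the hypothesis $\Omega\subset\subset B_{(1-\tau_0)R_0}$ with $\tau_0 > 0$ guarantees, together with the $k$-subharmonicity of both building blocks inherited from Proposition \ref{concavity} and the preceding lemma on $\rho$.
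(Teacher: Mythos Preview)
Your proposal is correct and follows essentially the same route as the paper: choose $K_0$ so that $K_0\rho-1<w-\delta$ on $\{\mu_0\le d\le 2\mu_0\}$, then $M_0$ so that $K_0\rho-1>w+\delta$ on $\{d\le\mu_0/M_0\}$, apply Lemma~\ref{Guan2002lemma} on $\Omega^{2\mu_0}$ and extend by $w$ to the rest of $\Omega_r$, and use the concavity of $S_k^{1/k}$ for the lower bound on $H_k[\underline u]$. The only cosmetic differences are that the paper writes down explicit values of $K_0$ and $\delta$ rather than arguing by ``large enough,'' and you should make explicit that the gluing is performed on $\Omega^{2\mu_0}$ (where $\rho$ is smooth) and then extended by $w$, since $\rho$ is not defined on all of $\{d\ge\mu_0\}$.
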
 
\begin{proof}
Since $B_{r_0}\subset \{z\in\Omega:d(z)>2\mu_0\}$, by choosing $K_0=\frac{r_0^{2-\frac{2n}k}}{C_\Omega\mu_0^2-\mu_0}$, we find $\forall\, z\in \overline{\Omega^{2\mu_0}}\setminus\Omega^{\mu_0}$, there holds
\begin{align*}
w-(K_0\rho-1)=&-|z|^{2-\frac{2n}k}+R_0^{2-\frac{2n}k}+a_0\frac{|z|^2}{R_0^2}-K_0\rho\\
\geq& -r_0^{2-\frac{2n}k}+R_0^{2-\frac{2n}k}-K_0(-\mu_0+C_\Omega \mu_0^2)\\
\geq&R_0^{2-\frac{2n}k},
\end{align*}
For any $z\in\overline{\Omega_{\frac{\mu_0}{M_0}}}:=\{z\in\overline\Omega:d(z)\leq \frac{\mu_0}{M_0}\}$, there also holds
\begin{align*}
(K_0\rho-1)-w\geq \frac12\Big((1-\tau_0)^{2-\frac{2n}k}-1)\Big)R_0^{2-\frac{2n}k}+K_0\Big(-\frac{\mu_0}{M_0}+C_\Omega (\frac{\mu_0}{M_0})^2\Big)\geq\frac14\Big((1-\tau_0)^{2-\frac{2n}k}-1\Big)R_0^{2-\frac{2n}k},
\end{align*}
provided that $M_0$ is a positive solution to
\begin{align}\label{eq::0101}
K_0(-\frac{\mu_0}{M_0}+C_\Omega(\frac{\mu_0}{M_0})^2)\geq-\frac14((1-\tau_0)^{2-\frac{2n}k}-1)R_0^{2-\frac{2n}k}
\end{align}
In fact, we can choose $\tau_0$  small enough such that \eqref{eq::0101} holds if $M_0>1$.


Take $\delta:=\min\{\frac14\Big((1-\tau_0)^{2-\frac{2n}k}-1\Big)R_0^{2-\frac{2n}k},R_0^{2-\frac{2n}k}\}$ and we apply Lemma \ref{Guan2002lemma} with $g=K_0\rho-1$, $h=w$ and $\delta$ on $\Omega^{2\mu_0}$, we obtain a smooth and strictly $k$-subharmonic function $\underline u$ in $\Omega^{2\mu_0}$. Moreover $\underline u=K_0\rho-1$ in $\Omega^{\frac{\mu_0}{M_0}}$, and $\underline u=w$ in $\overline{\Omega^{2\mu_0}}\setminus\Omega^{\mu_0}$. At last, we set $\underline u=w$ in $\Omega_r\setminus \Omega^{2\mu_0}$. By Lemma \ref{Guan2002lemma}, we have
$$H_k[\underline u]\geq \min\{H_k[w],H_k[K_0\rho]\}\geq \min\{C_n^ka_0^kR_0^{-2k},K_0^k\epsilon_0\}.$$
\end{proof}
We now consider the approximating equation
\begin{align}\label{approxeq}
\begin{cases}
H_k[u]=\varepsilon&\quad\text{in }\Omega_r,\\
u=\underline u&\quad\text{on }\partial\Omega_r
\end{cases}
\end{align}
Then $\underline u$ is a strictly subharmonic solution of above equation for any $\varepsilon<\epsilon_1$. By Li \cite{LiSongying2004}, \eqref{approxeq} admits a strictly $k$-subharmonic solution $u^{\varepsilon,r}\in C^\infty(\overline\Omega_r)$. 
Let $r_1= \min\{2^\frac{2k}{2k-n}R_0,\big(\frac{2a_0}{R_0^2}\big)^{-\frac{k}{2n}}\}$, $\forall\, r\leq r_1$, since $\underline u=-1$ on $\partial \Omega$ and $\underline u=-r^{2-\frac{2n}k}+R_0^{2-\frac{2n}k}-1+a_0\frac{r^2}{R_0^2}$ on $\partial B_r$, we have $\underline u\mid_{\partial \Omega_r}\leq -1$.
By maximum principle, we have $u^{\varepsilon.r}\leq-1$ when $r\leq r_1$, $\varepsilon\leq \epsilon_1$. 

In the following, we want to derive a $(\varepsilon,r)$-independent uniform $C^2$ estimate for $u^{\varepsilon,r}$. We prove the following
\begin{theorem}
Suppose $\Omega$ be a smooth $(k-1)$-pseudoconvex bounded domain. Assume that $\Omega$ satisfies Assumption \ref{assumption3.1}.
For sufficient small $r>0$ and $\varepsilon>0$, \eqref{approxeq} admits a $k$-subharmonic solution $u^{\varepsilon,r}$, where $\underline u$ is constructed above.
Moreover, $u^{\varepsilon,r}$ satisfies the following estimates,
\begin{align}
-|z|^{2-\frac{2n}k}+R_0^{2-\frac{2n}k}-1+a_0\frac{|z|^2}{R_0^2}&\leq u^{\varepsilon,r}\leq -|z|^{2-\frac{2n}k}+r_0^{2-\frac{2n}k}-1,\label{eq::est::09051}\\
|Du^{\varepsilon,r}|&\leq C|z|^{1-\frac{2n}{k}},\\
|\partial\bar\partial u^{\varepsilon,r}|&\leq C|z|^{-\frac{2n}{k}},
\end{align}
where $C$ is a uniform positive constant which is independent of $\varepsilon$ and $r$.

In addition, if $\Omega$ is starshaped with respect to the origin, there is a uniform positive constant $c$ independent of $\varepsilon$ and $r$ such that
\begin{equation}\label{eq::0101::2}|Du^{\varepsilon,r}|\geq c_0|z|^{1-\frac{2n}k}.\end{equation}
\end{theorem}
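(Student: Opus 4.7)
The plan is to break the theorem into four pieces: existence with $C^0$ control, gradient bound, complex Hessian bound, and (under the extra starshaped hypothesis) the gradient lower bound, each pushing the analysis to the boundary via the a priori estimates of Theorems~\ref{thm::gradientestimate} and~\ref{thm::2ndest}.

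First I would invoke Li's subsolution method \cite{LiSongying2004}: the strict subsolution $\underline u$ just constructed satisfies $H_k[\underline u]\ge\epsilon_1>\varepsilon$ with the correct boundary data, so for every admissible pair $(\varepsilon,r)$ there is a solution $u^{\varepsilon,r}\in C^\infty(\overline{\Omega_r})\cap\mathcal{SH}_k(\Omega_r)$. The left half of \eqref{eq::est::09051} is immediate from Lemma~\ref{comparison0718} applied against $\underline u$. For the right half, the fundamental solution $\bar u(z):=-|z|^{2-\frac{2n}{k}}+r_0^{2-\frac{2n}{k}}-1$ satisfies $H_k[\bar u]\equiv 0\le \varepsilon$, and using $r_0<R_0$ together with $r\le r_1$ one verifies $\bar u\ge\underline u=u^{\varepsilon,r}$ on $\partial\Omega_r$; a second application of the comparison principle gives $u^{\varepsilon,r}\le\bar u$. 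The subharmonic maximum principle now forces $u^{\varepsilon,r}\le -1<0$ in $\Omega_r$, which is the negativity needed to apply Theorems~\ref{thm::gradientestimate} and~\ref{thm::2ndest}.

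For the gradient bound, set $P=|Du^{\varepsilon,r}|^2(-u^{\varepsilon,r})^{-\frac{2n-k}{n-k}}$; since $f\equiv\varepsilon$ is constant, $|D\log f|\equiv 0$ and Theorem~\ref{thm::gradientestimate} collapses to $\max_{\overline{\Omega_r}}P\le\max_{\partial\Omega_r}P$. On $\partial B_r$ the boundary datum $\underline u=w$ is radial, so tangential derivatives vanish, while the normal derivative is sandwiched between $\partial_\nu\underline u$ and $\partial_\nu\bar u$, both of order $r^{1-\frac{2n}{k}}$; combined with $-u^{\varepsilon,r}\sim r^{2-\frac{2n}{k}}$ this makes $P|_{\partial B_r}$ bounded by the very constant that $P$ equals on the fundamental solution. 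On $\partial\Omega$ the data $u^{\varepsilon,r}\equiv-1$ kills tangential derivatives, the inequality $u^{\varepsilon,r}\ge\underline u$ with equality on $\partial\Omega$ bounds the outward normal derivative above by $\partial_\nu\underline u=K_0$, and the maximum principle gives the lower bound $\partial_\nu u^{\varepsilon,r}\ge 0$. Hence $P$ is uniformly bounded, and substituting the upper $C^0$ bound $-u^{\varepsilon,r}\le C|z|^{2-\frac{2n}{k}}$ into $|Du^{\varepsilon,r}|^2\le CP(-u^{\varepsilon,r})^{\frac{2n-k}{n-k}}$ yields the asserted $|Du^{\varepsilon,r}|\le C|z|^{1-\frac{2n}{k}}$ after a short exponent check. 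The Hessian estimate follows the same template with Theorem~\ref{thm::2ndest}: all logarithmic derivatives of $f=\varepsilon$ vanish and $P$ is already controlled, so the maximum of $H=u_{\xi\bar\xi}(-u)^{-\frac{n}{n-k}}(M-P)^{-\sigma}$ is attained on $\partial\Omega_r$; the boundary value is explicit on $\partial B_r$, and on $\partial\Omega$ it is handled by the standard tangential/mixed/normal-normal decomposition from \cite{CNS3,GuanBo1994} using the $(k-1)$-pseudoconvexity of $\Omega$ and the subsolution $\underline u$. Converting back, the same exponent arithmetic produces $|\partial\bar\partial u^{\varepsilon,r}|\le C|z|^{-\frac{2n}{k}}$.

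The main obstacle, and the step I expect to be the most delicate, is the starshaped lower bound \eqref{eq::0101::2}, which requires running a minimum principle on the same quantity $P$. On $\partial B_r$ the two-sided barrier $\underline u\le u^{\varepsilon,r}\le\bar u$ already forces $P\ge c>0$, since the scaling that yielded the upper bound is two-sided. On $\partial\Omega$ the starshaped condition $\langle z,\nu\rangle\ge c_1>0$ must be used to upgrade the strict inequality $u^{\varepsilon,r}<-1$ in $\Omega_r$ into a uniform positive lower bound on $\partial_\nu u^{\varepsilon,r}$ independent of $\varepsilon$ and $r$, most naturally via a radial barrier modeled on the fundamental solution, so that $P|_{\partial\Omega}\ge c>0$. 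Finally, a minimum-principle computation for $\log P$ at an interior critical point, using the concavity of $S_k^{1/k}$ from Proposition~\ref{concavity} with the inequalities reversed from the upper-bound proof, rules out an interior infimum strictly below the boundary infimum. Combined once more with $-u^{\varepsilon,r}\le C|z|^{2-\frac{2n}{k}}$, this gives \eqref{eq::0101::2}.
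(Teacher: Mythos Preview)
Your treatment of existence, the $C^0$ pinch, the gradient upper bound, and the complex Hessian bound follows the paper's approach closely and is essentially correct, though the second-order estimate on $\partial B_r$ is not literally ``explicit'': the paper rescales $\tilde u(z)=r^{\frac{2n}{k}-2}u^{\varepsilon,r}(rz)+1$ and then runs the full tangential/mixed/double-normal boundary argument on $\partial B_1$ for $\tilde u$, using in particular barriers of the form $A(1-x_n)\pm 2\,\mathrm{Re}(z_i\tilde u_n-\bar z_n\tilde u_{\bar i})$ for the mixed derivatives and the positivity of $S_{k-1}$ of the tangential block for $\tilde u_{n\bar n}$. This is routine but not automatic.

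The genuine gap is in your proof of the lower bound \eqref{eq::0101::2}. There is no minimum principle for $P=|Du|^2(-u)^{-\frac{2n-k}{n-k}}$: Theorem~\ref{thm::gradientestimate} is a one-sided statement, and its proof uses that at an interior maximum of $P$ the complex Hessian of $P$ is nonpositive in the $F^{i\bar j}$ metric; the resulting sign pattern cannot simply be reversed to rule out an interior minimum. Concavity of $S_k^{1/k}$ does not help here either, since it enters only through the upper bound on the third-order terms.

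The paper's actual argument is entirely different. It introduces the radial-derivative quantity
\[
G(z)=2\,\mathrm{Re}\{z_l u^{\varepsilon,r}_l\}+A\,u^{\varepsilon,r}-B|z|^2,
\]
and computes, with $L=F^{i\bar j}\partial_i\partial_{\bar j}$ the linearization of $\log H_k$, that $L(2\,\mathrm{Re}\{z_l u_l\})=2k$, $Lu=k$, and $L|z|^2=\mathcal F\ge k(C_n^k)^{1/k}\varepsilon^{-1/k}$; hence for $\varepsilon$ small $LG<0$, so $G$ attains its minimum on $\partial\Omega_r$. The starshapedness $\langle z,\nu\rangle>0$ is used only on $\partial\Omega$, to convert the two-sided normal-derivative bound (which the paper obtains via a \emph{harmonic} barrier $h_1$ in $\Omega\setminus B_{r_3}$, not a radial one) into $2\,\mathrm{Re}\{z_l u_l\}=t\cdot\nu\,|Du|\ge c_1>0$. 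On $\partial B_r$ one has $t\cdot Du=r|Du|\ge c_2 r^{2-\frac{2n}{k}}$ from the inner barrier. Choosing $A,B$ suitably makes $G\ge 0$ on $\partial\Omega_r$, hence $G\ge 0$ in $\Omega_r$, which gives $t\cdot Du\ge -Au+B|z|^2\ge c(-u)$ and thus $|Du|\ge c|z|^{1-\frac{2n}{k}}$. You should replace the proposed minimum-principle step by this auxiliary-function argument.
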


\subsection{$C^0$ estimate}
Since $\underline u$ is a subsolution to \eqref{approxeq}, we obtain that
\begin{align}\label{0825::eq::1}
u^{\varepsilon,r}\geq \underline u\quad\text{in }\Omega_r.
\end{align}

Let $$\overline u=-|z|^{2-\frac{2n}k}+r_0^{2-\frac{2n}k}-1.$$ By taking $r\leq \min\{r_1,r_2\}$, where $r_2=R_0(r_0^{2-\frac{2n}k}-R_0^{2-\frac{2n}k})^\frac12a_0^{-\frac12}$,
we have
$$u^{\varepsilon,r}\leq \overline u\quad\text{on }\partial \Omega_r.$$
Note that $H_k[u^{\varepsilon,r}]=\varepsilon>0=H_k[\overline u]$ in $\Omega_r$, it follows that
\begin{align}\label{0814::eq::3}
u^{\varepsilon,r}\leq \overline u\quad\text{in }\Omega_r.
\end{align}
By \eqref{0825::eq::1} and \eqref{0814::eq::3}, we obtain
\begin{align*}
-|z|^{2-\frac{2n}k}+R_0^{2-\frac{2n}k}-1+a_0\frac{|z|^2}{R_0^2}\leq u^{\varepsilon,r}\leq -|z|^{2-\frac{2n}k}+r_0^{2-\frac{2n}k}-1.
\end{align*}
This gives the $C^0$ estimate \eqref{eq::est::09051}.

\subsection{Gradient estimates} 
Base on the key estimate \eqref{eq::est::gradient}, we can prove the global gradient estimate in this subsection.
\subsubsection{\bf Reducing global gradient estimates to boundary gradient estimates.}
Since $u^{\varepsilon,r}<0$, $f=\varepsilon$, by Theorem \ref{thm::gradientestimate}, we have
$$\max_{\overline {\Omega_r}}P=\max_{\partial\Omega_r}P.$$
\subsubsection{\bf Boundary gradient estimates}
To prove boundary gradient estimates, we will construct barriers near $\partial\Omega$ and $\partial B_r$ respectively.

Since $u^{\varepsilon,r}=\underline u=-1$ on $\partial\Omega$ and $u^{\varepsilon,r}\geq \underline u$ in $\Omega_r$, we have
\begin{align}\label{0813::eq::1}
|Du^{\varepsilon,r}|=u^{\varepsilon,r}_\nu\leq \underline u_\nu\quad\text{on }\partial \Omega,
\end{align}
where $\nu$ is the unit outer normal to $\partial\Omega$. 
Let $r_3\leq \min\{r_1,r_2,1\}$ and $h_1$ be the 
harmonic function $h_1$ in $\Omega\setminus B_{r_3}$ with $h_1=-1$ on $\partial\Omega$ and $h_1=-r_3^{2-\frac{2n}k}+r_0^{2-\frac{2n}k}-1$ on $\partial B_{r_3}$. Then we have $h_1\geq u^{\varepsilon,r}$ on $\partial\Omega_{r_3}$. So $h_1\geq u^{\varepsilon,r}$ in $\Omega_{r_3}$, and it follows that
\begin{align}\label{0813::eq::2}
u_\nu^{\varepsilon,r}\geq h_{1,\nu}>0\quad\text{on }\partial \Omega,
\end{align}  
That is there exist a positive constant $C$ such that 
\begin{align}\label{0825::eq::4}
0<C^{-1}\leq u^{\varepsilon,r}_\nu\leq C\quad\text{on }\partial \Omega.
\end{align}

Let $h_2$ be a harmonic function with $h_2=\underline u$ on $\partial B_r$ and $h_2=\overline u=-\frac12|z|^{2-\frac{2n}{k}}$
on $\partial B_{2r}$. Let
$$\tilde h_2(z)=r^{\frac{2n}k-2}(h_2(r z)+r^{2-\frac{2n}k})=r^{\frac{2n}k-2}h_2(r z)+1.$$
Then $\tilde h_2$ is a harmonic function in $B_2\setminus B_1$ with $\tilde h_2=a_0r^{\frac{2n}k}R_0^{-2}+r^{\frac{2n}k-2}(R_0^{2-\frac{2n}k}-1)$ on $\partial B_1$ and $\tilde h_2=-2^{1-\frac{2n}k}$
on $\partial B_2$. Let
\begin{align}\label{eq::0101::3}\tilde u=r^{\frac{2n}k-2}u^{\varepsilon,r}(r z)+1,\end{align}
and
\begin{align}\label{eq::0101::4}\tilde {\underline u}=r^{\frac{2n}k-2}\underline u(r z)+1.\end{align}
By maximum principle, we have
$$\tilde{\underline  u}\leq \tilde u\leq \tilde h_2\text{ in }B_2\setminus B_1.$$
Note that
$$\tilde{\underline  u}= \tilde u= \tilde h_2=a_0r^{\frac{2n}k}R_0^{-2}+r^{\frac{2n}k-2}(R_0^{2-\frac{2n}k}-1)\text{ in }\partial B_1.$$
We obtain
$$D'\tilde u=D'\tilde{\underline u}=D'\tilde h_2=0\quad\text{on }\partial B_1,$$
and
\begin{align*}
0<c(n,k)\leq\tilde{\underline u}_\nu\leq \tilde u_\nu\leq \tilde h_{2,\nu}\leq \tilde C\quad\text{on }\partial B_1,
\end{align*}
where $\nu$ is the unit outer normal to $\partial B_1$, $\tilde C$ is independent of $r$ and $\varepsilon$.
So we  obtain
$$C^{-1}\leq |D\tilde u|\leq C\quad\text{on }\partial B_1.$$
By \eqref{eq::est::09051}, we have
\begin{align}\label{0825::eq::5}
|Du^{\varepsilon,r}|\leq Cr^{1-\frac{2n}k}=C|z|^{1-\frac{2n}k}\leq C(-u^{\varepsilon,r})^a\quad\text{on }\partial B_r,
\end{align}
and 
\begin{align}\label{eq::0922::1}
|Du^{\varepsilon,r}|\geq C^{-1}r^{1-\frac{2n}k}\quad\text{on }\partial B_r.
\end{align}
By \eqref{eq::est::gradient}, \eqref{eq::est::09051}, \eqref{0813::eq::1} and \eqref{0825::eq::5}, we obtain
$$|Du^{\varepsilon,r}|\leq C(-u^{\varepsilon,r})^a\leq C(|z|^{2-\frac{2n}k}-R_0^{2-\frac{2n}k}+1-a_0\frac{|z|^2}{R_0^2})\leq C|z|^{1-\frac{2n}k}\quad\text{in }\Omega_r.$$

\subsubsection{\bf Positive lower bound of $|Du^{\varepsilon,r}|$} 
Since $\partial\Omega$ is starshaped with respect to the origin, we have $t\cdot \nu>0$ on $\partial\Omega$, where $\nu$ is the unit outer normal to $\partial\Omega$, $t=(t_1,\cdots, t_{2n})=(y_1,\cdots, y_n, x_1,\cdots,x_n)$, $z_i=\frac1{\sqrt2}(x_i+\sqrt{-1}y_i)$. By \eqref{0825::eq::4}, $|Du|\geq c$ for some uniform $c$ on $\partial\Omega$. 
Then we have
$$\sum_{l=1}^n(z_lu^{\varepsilon,r}_l+\bar z_lu^{\varepsilon,r}_{\bar l})=\sum_{l=1}^{2n}t_lu^{\varepsilon,r}_{t_l}=t\cdot\nu|Du^{\varepsilon,r}|\geq c\min_{\partial\Omega}t\cdot\nu:=c_1> 0.$$
Let $F^{i\bar j}=\frac{\partial}{\partial u^{\varepsilon,r}_{i\bar j}}(\log H_k[u^{\varepsilon,r}])$, $L=F^{i\bar j}\partial_{i\bar j}$.  Consider the function 
$$G :=2\mathrm{Re}\{z_lu^{\varepsilon,r}_l\}+Au^{\varepsilon,r}-B|z|^2,$$ where $A,B$ are constants to be determined later.
By calculation, we have
\begin{align*}
\mathcal F:=&\sum_{l=1}^nF^{i\bar i}=\sum_{l=1}^n\frac{S^{l\bar l}_k(\{u^{\varepsilon,r}_{i\bar j}\}_{1\leq i,j\leq n})}{S_k(\{u^{\varepsilon,r}_{i\bar j}\}_{1\leq i,j\leq n})}=(n-k+1)\frac{S_{k-1}(\{u^{\varepsilon,r}_{i\bar j}\}_{1\leq i,j\leq n})}{S_k(\{u^{\varepsilon,r}_{i\bar j}\}_{1\leq i,j\leq n})}\\\geq& k(C_n^k)^\frac1kS^{-\frac1k}_k(\{u^{\varepsilon,r}_{i\bar j}\}_{1\leq i,j\leq n})= k(C_n^k)^\frac1k\varepsilon^{-\frac1k}.
\end{align*}

On $\partial B_r$, we have $Du^{\varepsilon,r}=|Du^{\varepsilon,r}|\nu=|Du^{\varepsilon,r}|\frac tr$. It follows by \eqref{eq::0922::1} that
$$t\cdot Du^{\varepsilon,r}=r|Du^{\varepsilon,r}|\geq c_2r^{2-\frac{2n}{k}}\quad\text{on }\partial B_r.$$

By taking $r_4=\min\{4^\frac{2k}{2k-2n}R_0, (\frac{4a_0}{R_0^2})^{-\frac{k}{2n}}\}$, we have
$$\underline u\leq -\frac12r^{2-\frac{2n}k}\quad\text{on }\partial B_r.$$
It follows that if we take $A\leq \min\{\frac {c_1}2,c_2\}$, $B\leq \frac{c_1}{2R_0^2}$, $\varepsilon<\min\{\epsilon_1,\epsilon_2\}$, $r\leq \min\{r_3,r_4,r_5\}$, where $\epsilon_2:=\frac{C_n^kB^k}{(2+A)^k}$, $r_5= (\frac{c_2}{2B})^\frac{k}{2n}$,  then there holds 
$$G\geq c_1-A-BR_0^2\geq0\quad\text{on }\partial\Omega,$$
$$\quad G\geq (c_2-\frac A2)r^{2-\frac{2n}k}-Br^2\geq 0\quad\text{on }\partial B_r.$$
and
$$LG=(2+A)k-B\mathcal F=(2+A)k-Bk(C_n^k)^\frac1k\varepsilon^{-\frac1k}<0\quad\text{in }\Omega_r.$$
By maximum principle,
$$G\geq \min_{\partial \Omega_r}G>0.$$
Thus we prove $G>0$ in $\overline \Omega_r$ and \eqref{eq::0101::2} is obtained.

\subsection{Second order estimates}
Base on the key estimate \eqref{eq::est::2nd}, we can prove the global second order estimate in this subsection.

\subsubsection{\bf The global second order estimates can be reduced to the boundary second order estimates}
By Theorem \ref{thm::2ndest}, we have
$$\max_{\overline {\Omega_r}}H=\max_{\partial\Omega_r}H+C.$$
So 
\begin{align}\label{0814::eq::4}
u^{\varepsilon,r}_{\xi\bar \xi}(-u)^{-\frac{n}{n-k}}\leq C(\max_{\partial\Omega_r}H+C)\leq C(\max_{\partial\Omega_r}|\partial\bar \partial u^{\varepsilon,r}(-u^{\varepsilon,r})^{-\frac{n}{n-k}}|+1). 
\end{align}

On the other hand, let $D_\tau=\sum\limits_{i=1}^{2n}a_i\frac{\partial}{\partial t_i}$, with $\sum\limits_{i=1}^{2n}a_i^2=1$, from $Lu^{\varepsilon,r}_{\tau\tau}\geq 0$, we obtain 
$$u^{\varepsilon,r}_{\tau\tau}\leq \max_{\partial\Omega_r}|D^2u^{\varepsilon,r}|\quad\text{in }\Omega_r.$$
Since $u$ is subharmonic, we have 
$$-(2n-1)\max_{\partial\Omega_r}|D^2u^{\varepsilon,r}|\leq u_{t_it_i}\leq \max_{\partial\Omega_r}|D^2u^{\varepsilon,r}|\quad\text{in }\Omega_r.$$ 
Take $\tau=\frac1{\sqrt 2}(\frac{\partial}{\partial t_i}\pm\frac{\partial}{\partial t_j})$, we get
$$|u^{\varepsilon,r}_{t_it_j}|\leq C\max_{\partial\Omega_r}|D^2u^{\varepsilon,r}|\quad\text{in }\Omega_r.$$ 
Hence
$$|D^2u^{\varepsilon,r}|\leq C\max_{\partial\Omega_r}|D^2u^{\varepsilon,r}|\quad\text{in }\Omega_r.$$

\subsubsection{\bf Second order estimates on the boundary $\partial\Omega_r$}\ 
The second order estimate on $\partial\Omega$ is almost the same as in \cite{GMZ2022}. So we only need to prove the second order estimate on $\partial B_r$.

\noindent{\bf{Step 1. Pure tangential derivatives estimates}}


Near $p\in \partial B_r$, we may assume $p=(0,\cdots,0,r)$. Near $\tilde p=(0,\cdots,0,1)$, $\partial B_1$ can be represented as a graph  
$$x_n=\rho(t')=\bigg(1-\sum_{i=1}^{2n-1}t_i^2\bigg)^{\frac12},$$
where 
$t'=(1,\cdots, t_{2n-1})$. 

Let $\tilde u$ and $\tilde{\underline u}$ be the functions defined in \eqref{eq::0101::3} and \eqref{eq::0101::4}. Since $\tilde u$ is equal to some constant on $\partial B_1$, we have
$$\tilde u_{t_it_j}(\tilde p)=\tilde u_{x_n}(\tilde p)\delta_{ij}.$$
It follows 
\begin{align*}
|\tilde u_{t_it_j}(\tilde p)|\leq C.
\end{align*}
Hence
\begin{align*}\quad |u^{\varepsilon,r}_{t_it_j}(p)|\leq Cr^{-\frac{2n}k}.
\end{align*}
Furthermore, we have
\begin{align}\label{0814::eq::2}
\tilde u_{i\bar j}(\tilde p)=\tilde u_{x_n}(\tilde p)\delta_{ij}.
\end{align}

\noindent{\bf{Step 2. Tangential-normal derivatives estimates}}

To estimate the tangential-normal second order derivatives on $\partial B_r$, we just estimate $\tilde u_{t_\alpha x_n}(\tilde p)$ for $\alpha=1,\cdots,2n-1$.
Note that $F^{i\bar j}$ and $\tilde u_{i\bar j}$ are both Hermitian matrix, and can be diagonalized by a same unitrary matrix, $F^{i\bar k}u_{j\bar k}$ is also an Hermitian matrix. It follows that 
$$F^{i\bar j}(z_r\tilde u_s-\bar z_s\tilde u_{\bar r})_{i\bar j}=0.$$
Now we estimate the mixed tangential-normal derivative $\tilde u_{t_ix_n}(\tilde p)$ for $\tilde p\in\partial B_1$.  Since $\tilde u(t',\rho(t'))$ is constant on $\partial B_1(0)$, we have
$$0=\tilde u_{t_\alpha}+\tilde u_{t_{2n}}\rho_{t_\alpha}=\tilde u_{t_\alpha}-\frac{t_\alpha}{\rho}\tilde u_{t_{2n}}, \quad\alpha=1,\cdots,2n-1.$$
That is on $\partial B_1\cap B_\frac12(\tilde p)$, 
$$ x_n\tilde u_{x_i}-x_i\tilde u_{x_n}=0\quad i=1,\cdots,n-1\quad\text{and}\quad x_n\tilde u_{y_i}-y_i\tilde u_{x_n}=0\quad i=1\cdots,n.$$ 
It follows that
$$y_n\tilde u_{x_i}-x_i\tilde u_{y_n}=0\quad i=1,\cdots, n-1\quad\text{and}\quad y_n\tilde u_{y_i}-y_i\tilde u_{y_n}=0\quad i=1,\cdots,n.$$

To estimate $\tilde u_{x_ix_n}(\tilde p)$ for $i=1,\cdots,n-1$, set 
$$\begin{aligned}
g^1=2\mathrm{Re}(z_{i}\tilde u_n-\bar z_n\tilde u_{\overline{i}})=x_i\tilde u_{x_n}-x_n\tilde u_{x_i}+y_i\tilde u_{y_n}-y_n\tilde u_{y_i}.
\end{aligned}$$
Note that 
\begin{align*}
F^{i\bar j}g_{i\bar j}=&F^{i\bar j}(z_{i}\tilde u_n-\bar z_n\tilde u_{\overline{i}})_{i\bar j}+F^{i\bar j}(\bar z_{i}\tilde u_{\bar n}-z_n\tilde u_{{i}})_{i\bar j}=0.
\end{align*}
On $\partial B_1(0)\cap B_\frac12(\tilde p)$, consider the barrier function 
$$\Phi=A(1-x_n)\pm g^1.$$ 
Since $g_1$ is bounded on $\partial B_1(0)\cap B_\frac12(\tilde p)$, $1-x_n$ is bounded from below on $\partial B_\frac12(\tilde p)\cap B_1(0)$, we can choose a postive $A$ such that $\Phi\geq 0$ on $\partial(\partial B_1(0)\cap B_\frac12(\tilde p))$.
It follows 
$$|g^1_{x_n}(\tilde p)|\leq C.$$ 
However, at $\tilde p$, we have
$$g^1_{x_n}=-\tilde u_{x_i}-\tilde u_{x_ix_n}.$$ 
Thus $$\tilde u_{x_ix_n}(\tilde p)\leq C, \quad i=1,\cdots,n-1.$$

To estimate $\tilde u_{y_ix_n}(\tilde p)$ for $i=1,\cdots,n$, set
$$\begin{aligned}
g^2=&2\mathrm{Im}(z_{i}\tilde u_n-\bar z_n\tilde u_{\overline{i}})
=y_i\tilde u_{x_n}-x_n\tilde u_{y_i}+y_n\tilde u_{x_i}-x_i\tilde u_{y_n}.
\end{aligned}$$
Proceeding similarly, we obtain
$$|\tilde u_{y_ix_n}(\tilde p)|\leq C,\quad i=1,\cdots,n.$$

\noindent{\bf{Step 3. Double normal derivative estimate}}\ 


By pure tangential derivative estimate on $\partial B_1$, we have
$$|\tilde u_{y_ny_n}(p)|\leq C.$$
To estimate $\tilde u_{x_nx_n}(\tilde p)$, it is suffices to estimate $\tilde u_{n\bar n}(\tilde p)$. By rotating $\{z_1,\cdots,z_{n-1}\}$, we may assume $\{\tilde u_{i\bar j}(\tilde p)\}_{1\leq i,j\leq n-1}$ is diagonal. Then
$$r^{2n}\varepsilon=H_k[\tilde u]=\tilde u_{n\bar n}S_{k-1}(\{\tilde u_{i\bar j}\}_{1\leq i,j\leq n-1})-\sum_{\beta=1}^{n-1}|\tilde u_{\beta n}|^2S_{k-2}(\{\tilde u_{i\bar j}\}_{1\leq i,j\leq n-1}).$$
By \eqref{0814::eq::2}, we obtain
\begin{align*}
S_{k-1}(\{\tilde u_{i\bar j}\}_{1\leq i,j\leq n-1})=& S_{k-1}(\{\tilde{ \underline u}_{i\bar j}\}_{1\leq i,j\leq n-1}+\frac12(\tilde u-\tilde{\underline u})_{x_n}I_{n-1})\\
\geq& S_{k-1}(\{\tilde{ \underline u}_{i\bar j}\}_{1\leq i,j\leq n-1})\\
\geq& C_n^{k-1}(C_n^k)^\frac{1-k}k\min_{\partial\Omega}H^{\frac{k-1}k}_k[\tilde{\underline u}]:= c_1.
\end{align*}
So
$$\tilde u_{n\bar n}(p)\leq C.$$
Combining these three cases together, and noting that $\tilde u$ is sunharmonic, we obtain
$$|\partial\bar\partial \tilde u|\leq C\quad\text{on }\partial B_1.$$
Hence
$$|\partial\bar\partial u^{\varepsilon,r}|\leq Cr^{-\frac{2n}k}\quad\text{on }\partial B_r.$$
By \eqref{0814::eq::4} and $C^0$ estimate, we have
$$|\partial\bar\partial u^{\varepsilon,r}|\leq C|z|^{-\frac{2n}k}\quad\text{in }\Omega_r.$$

\section{Proof of Theorem \ref{mainthm}}
\subsection{Uniqueness}The uniqueness follows from comparison theorem  \ref{comparison0718}.

Let $u,v$ be two solutions to \eqref{maineq} in $\Omega\setminus\{0\}$. For any $z_0\in\Omega\setminus\{0\}$, we first show $u(z_0)\leq v(z_0)$. In fact, for any $t\in (0,1)$, since
$u-tv =-(1-t)|z|^{2-\frac{2n}k}+C,$ there exists $r$ sufficiently small such that $z_0\in\Omega\setminus \overline B_r$ and $u<tv$ on $\partial B_r.$ Note that $u=-1<-t=tv$ on $\partial\Omega.$
By comparison theorem \ref{comparison0718}, we get
$u<tv$ in $\Omega\setminus B_r.$
Therefore
$u(z_0)\leq tv(z_0).$
Let $t\rightarrow 1$, we obtain
$u(z_0)\leq v(z_0).$
Hence 
$u\leq v$ in $\Omega\setminus\{0\}.$
Similarly, we obtain
$u\geq v$ in $\Omega\setminus\{0\}.$
Therefore $u=v$ in $\Omega\setminus\{0\}.$

\subsection{Existence}The existence follows from the uniform $C^2$-estimates for $u^{\varepsilon, r}$. 


For $K=\Omega\setminus B_{r_0}$, 
for the solution to \eqref{approxeq}, by the estimate \eqref{eq::est::09051},
we have
\begin{align*}
|u^{\varepsilon,r}|_{C^1(K)}+|\Delta u^{\varepsilon,r}|\leq C(K).
\end{align*}
By Evans-Krylov theory, we obtain for any $0<\alpha<1$,
$$|u^{\varepsilon,r}|_{C^{2,\alpha}(K)}\leq C(K,\varepsilon).$$
By compactness, we can find a sequence $r_i\rightarrow0$ such that
$$u^{\varepsilon,r_i}\rightarrow u^\varepsilon\quad\text{in }C^{2}(K).$$
where $u^\varepsilon$ satisfies
\begin{equation*}\label{approxeq1}
\begin{cases}
H_k[u^\varepsilon]=\varepsilon&\quad\text{in }K,\\
u=-1&\quad\text{on }\partial\Omega,
\end{cases}
\end{equation*}
and 
\begin{equation}\label{0814::eq::5}\begin{aligned}
	&-C-|z|^{2-\frac{2n}k}\leq u^{\varepsilon}(z)\leq -|z|^{2-\frac{2n}k},\\
	&|Du^{\varepsilon}(z)|\leq C|z|^{1-\frac{2n}k},\\
	&|\p\bar \p u^{\varepsilon}(z)|\leq C|z|^{-\frac{2n}k}.
\end{aligned}\end{equation}
Moreover,
\begin{equation*}
	|u^{\varepsilon}|_{C^{2,\alpha}(K)}\leq C(K,\varepsilon).
	\end{equation*}
By the classical Schauder theory,  $u^{\varepsilon}$ is smooth.

By above estimates \eqref{0814::eq::5} for $u^{\varepsilon}$, for any sequence $\varepsilon_j\rightarrow 0$, there is a subsequence of $\{u^{\varepsilon_j}\}$ converging to a function $u$ in $C^{1,\alpha}$ norm on any compact subset of $\Omega\setminus \{0\}$. 
Thus $u\in C^{1,\alpha}(\Omega\setminus \{0\})$ and satisfies the estimates \eqref{mainest1} and \eqref{mainest2}. By the convergence theorem of the complex $k$-Hessian operator proved by Trudinger-Zhang \cite{TrudingerZhang2014} (see also Lu \cite{Lu2015}), $u$ is a  solution to \eqref{maineq}.

\section*{Acknowledgements:}The second author was supported by National Natural Science Foundation of China (grants 11721101 and 12141105) and National Key Research and Development Project (grants SQ2020YFA070080). The third author was supported by NSFC grant No. 11901102.

\bibliographystyle{plain}
\bibliography{2023-04-16-Gao-Ma-Zhang}

\end{document}